\theoremstyle{definition}
\newtheorem{Theorem}{Theorem}[section]
\newtheorem{Proposition}[Theorem]{Proposition}
\newtheorem{Assumptions}[Theorem]{Assumptions}
\newtheorem{Lemma}[Theorem]{Lemma}
\newtheorem{Corollary}[Theorem]{Corollary}
\newtheorem{Definition}[Theorem]{Definition}
\newtheorem{Remark}[Theorem]{Remark}
\newcommand{\bTheorem}[1]{
	\begin{Theorem} \label{T#1} }
	\newcommand{\eT}{\end{Theorem}}
\newcommand{\bProposition}[1]{
	\begin{Proposition} \label{P#1}}
	\newcommand{\eP}{\end{Proposition}}
\newcommand{\bLemma}[1]{
	\begin{Lemma} \label{L#1} }
	\newcommand{\eL}{\end{Lemma}}
\newcommand{\bCorollary}[1]{
	\begin{Corollary} \label{C#1} }
	\newcommand{\eC}{\end{Corollary}}
\renewcommand{\(}{\left(}
\renewcommand{\)}{\right)}
\newcommand{\cblue}{\color{blue}}
\renewcommand{\u}{\mathbf{u}}
\newcommand{\U}{\mathbf{U}}
\newcommand{\vv}{\mathbf{v}}
\newcommand{\w}{\mathbf{w}}
\renewcommand{\b}{\mathbf{b}}
\newcommand{\dta}{\tau}
\newcommand{\ddta}{\,\mathrm{d}\tau}
\newcommand{\iQt}{\int_{\Omega_t}}
\newcommand{\CC}{\mathbf{C}}
\newcommand{\HH}{\mathbf{H}}
\newcommand{\DD}{\mathbf{D}}
\newcommand{\BB}{\mathbf{B}}
\newcommand{\I}{\mathbf{I}}
\newcommand{\TT}{\mathbf{T}}
\newcommand{\tr}[1]{\mathrm{tr}\({#1}\)}
\newcommand{\trr}[1]{\mathrm{tr}({#1})}
\newcommand{\trC}{\mathrm{tr}(\CC)}
\newcommand{\trH}{\mathrm{tr}(\HH)}
\newcommand{\Du}{\mathrm{D}\u}
\renewcommand{\div}[1]{\mathrm{div}\left({#1}\right)}
\newcommand{\di}[1]{\mathrm{div}\,{#1}}
\newcommand{\dib}[1]{\mathrm{div}\big({#1}\big)}
\newcommand{\p}{\partial}
\newcommand{\na}{\nabla}
\newcommand{\dx}{\,\mathrm{d}x}
\renewcommand{\d}{d}
\DeclarePairedDelimiter{\norm}{\|}{\|}
\DeclarePairedDelimiter{\snorm}{|}{|}
\newcommand\restr[2]{\ensuremath{\left.#1\right|_{#2}}}
\def\softd{{\leavevmode\setbox1=\hbox{d}%
		\hbox to 1.05\wd1{d\kern-0.4ex{\char039}\hss}}}
\date{}
\providecommand{\keywords}[1]
{
	\small	
	\textbf{Keywords:} #1
}
\date{\today}
\title{Relative energy and weak-strong uniqueness of a two-phase viscoelastic phase separation model}
\author{Aaron Brunk \and M\' aria Luk\' a\v cov\' a-Medvi{\softd}ov\'a}
\begin{document}

\maketitle

\bigskip

\centerline{ Institute of Mathematics, Johannes Gutenberg-University Mainz}

\centerline{Staudingerweg 9, 55128 Mainz, Germany}

\centerline{abrunk@uni-mainz.de}
\centerline{lukacova@uni-mainz.de}

\begin{abstract}The aim of this paper is to analyze a viscoelastic phase separation model.
	We derive a suitable notion of the relative energy taking into account the non-convex nature of the energy law for the viscoelastic phase separation. This allows us to prove the  weak-strong uniqueness principle. We will provide the estimates for the full model in two space dimensions. For a reduced model we present the estimates in three space dimensions and derive conditional relative energy estimates.
\end{abstract}

\keywords{relative energy, weak-strong uniqueness, viscoelastic phase separation,  partial differential equations}

\maketitle

\section{Introduction}

Phase separation is an important process for many physical and industrial applications. Newtonian binary mixtures are well understood and modelled by "model H" \cite{Hohenberg}, which is given by the Cahn-Hilliard-Navier-Stokes system. On the other hand, transition to dynamically asymmetric constituents, for example  polymer-solvent mixtures, is much less understood.  As reported in  \cite{Tanaka., Zhou.2006} many interesting new phenomena such as phase inversion, transient formation of network-like structures and volume shrinking, arise due to the dynamic asymmetry of the mixture and the viscoelastic behaviour. The dynamic asymmetry\cite{Tanaka2017} follows from the different time-scales of the polymer and the solvent.  For such problems the term \emph{viscoelastic phase separation} was coined by Tanaka \cite{Tanaka.}.

The mathematical model for viscoelastic phase separation was later reconsidered by Zhou, Zhang, E \cite{Zhou.2006}, where a thermodynamically consistent version of the viscoelastic phase separation model has been derived. In \cite{Brunk.b,Brunk.} we have proven the existence of a global weak solution to a viscoelastic phase separation model.

Our model of viscoelastic phase separation describes  time evolution of the volume fraction of a polymer $\phi$ and the bulk stress $q\I$, which is a pressure arising from (microscopic) intermolecular attractive interactions. This  leads to a strongly coupled cross-diffusion system. The evolution of $\phi$ is governed by the Cahn-Hilliard type equation, while the evolution of the  bulk stress by a convection-diffusion-reaction equation. These equations are further combined with the Navier-Stokes-Peterlin system for the flow velocity $\u$ and the conformation tensor $\CC$, which describes viscoelastic effects of a polymeric phase, see \cite{2a} for physical details of model derivation.

Through the last decade the concept of relative entropy or relative energy has gained much attention in physical and mathematical literature. We should mention in particular the seminal works of Dafermos \cite{Dafermos.1979,Dafermos.1979b} and DiPerna \cite{DiPerna.}.  Taking into account the physical nature of nonlinear problems together with the underlying energetic structure we can derive a problem-intrinsic metric-like tool to compare different solutions.

The weak-strong uniqueness principle is a consequence of the relative entropy inequality and can be freely stated as follows. Consider a weak and a more regular solution (e.g. strong or classical solution) of a problem. Then the relative energy measuring a distance between both solutions at a given time can be bounded by the relative energy at the initial time. The latter vanishes if both solutions start from the same initial data \cite{Wiedmann.2019}.

The above concepts can be used to find a required regularity of generalized solutions in order to investigate uniqueness of a solution. The concept is often used in the literature for hyperbolic conservation laws \cite{Dafermos.1979,tzavaras} and compressible Navier-Stokes equations \cite{Feireisl.2018,Feireisl.2012,Lu.2018,21,22,23}. A common property of these models is the convexity of the energy law.  Due to the phase separation behavior that we study in the present paper we need a new approach taking into account a non-convex energy.  In \cite{Feireisl.2016,Hosek.2019,lasarzik2019weak,lasarzik2020analysis} the Allen-Cahn equation describing the phase-field process has been investigated applying the relative energy, see also \cite{Giesselmann.2017} for further reading. Application of this technique to viscoelastic equations can be found in \cite{Lu2018,brunk2021existence}. Maximal dissipative solutions applying the weak-strong uniqueness were analysed in \cite{lasarzik2020maximal}. \\

The aim of this paper is to develop a relative energy concept for the viscoelastic phase separation model. We will further derive the weak-strong uniqueness principle for the viscoelastic phase separation model. For the two-dimensional case the weak-strong uniqueness result is unconditional. Furthermore, we state the corresponding results in three space dimensions requiring the existence of a global weak solution. Finally, by combination with the results of \cite{brunk2021existence} we derive a conditional weak-strong uniqueness principle for the full model in three space dimensions.

\section{Mathematical Model}
The total energy of the polymer-solvent mixture consists of the mixing energy between the polymer and the solvent, the bulk stress energy, the elastic energy and the kinetic energy\cite{Brunk.b,Brunk.}.
\begin{align}
	E_{tot}(\phi,q,\CC,\u)&=  E_{mix}(\phi) + E_{bulk}(q) + E_{el}(\CC) + E_{kin}(\u) \label{eq:free_energy}\\
	&=\int_\Omega \(\frac{c_0}{2}\snorm*{\na\phi}^2 + F(\phi)\) \dx + \int_\Omega \frac{1}{2}q^2 \dx + \int_\Omega \(\frac{1}{4}\tr{\TT - 2\ln(\CC) - \I}\)\dx + \int_\Omega \frac{1}{2}\snorm*{\u}^2 \dx. \nonumber
\end{align}
Here $\phi$ denotes the volume fraction of polymer molecules, $q\I$ the bulk stress arising from polymeric interactions, $\CC$ the viscoelastic conformation tensor and $\u$ the volume averaged velocity consisting of a solvent and a polymer velocity. Furthermore, $c_0$ is a positive constant controlling the interface width and $F(\phi)$ is a generally non-convex mixing potential. The viscoelastic phase separation model has been derived in \cite{Brunk.b,2a} from (\ref{eq:free_energy}) applying the GENERIC methodology \cite{Grmela.1997,Grmela.1997b} or the virtual work principle \cite{Groot.2016}. { We note that a direct application of the GENERIC approach would yield  non-dissipative time evolution of viscoelastic effects as shown in \cite{Zhou.2006}.
Motivated by the recent works \cite{Barrett}, \cite{Malek}, where dissipative viscoelastic models were introduced, we add in the time evolution of the bulk stress $q\I$ and conformation tensor $\CC$ additional dissipative terms. Being in the isothermal case the dissipative operators act on
the variational derivative of the energy. This directly applies for the dissipative term in the bulk stress equation and can be generalized also for the equation of the conformation tensor $\CC$ using a suitable nonlinear function of
$\frac{\delta E}{\delta \CC}.$
Consequently, the resulting system of partial differential equations is of damped Hamiltonian type and } reads
\begin{tcolorbox}
\vspace{-1em}
	\begin{align}
		\label{eq:full_model}
		\begin{split}
			\frac{\partial \phi}{\partial t} + \hspace{1em}\u\cdot\nabla\phi  &= \dib{m(\phi)\nabla\mu} - \dib{n(\phi)\nabla\big(A(\phi)q\big)} \\
			\frac{\partial q   }{\partial t} + \hspace{1em}\u\cdot\nabla q    &= -\frac{1}{\tau_b(\phi)}q + A(\phi)\Delta\big(A(\phi)q\big) - A(\phi)\dib{n(\phi)\nabla\mu} + \varepsilon_1\Delta q\\
			\frac{\partial\u   }{\partial t} + (\u\cdot\nabla)\u  &= \dib{\eta(\phi)\Du} -\nabla p + \di{\TT}  + \nabla\phi\mu \\
			\frac{\partial\CC  }{\partial t} + (\u\cdot\nabla)\CC &= (\nabla\u)\CC + \CC(\nabla\u)^\top - h(\phi)\trC\,[\TT-\I] + \varepsilon_2\Delta\CC \\
			\div{\u} &= 0 \hspace{1cm}\TT = \tr{\CC}\CC  \hspace{1cm}   \mu = -c_0\Delta\phi + F^\prime(\phi).
		\end{split}
	\end{align}
\end{tcolorbox}
Here $\Du$ denotes the symmetric velocity gradient of $\u.$ System (\ref{eq:full_model}) is formulated on $(0,T)\times\Omega$, where $\Omega\subset\mathbb{R}^d, d=2,3$ with a convex Lipschitz-continuous boundary. It is equipped with the following initial and boundary conditions, respectively,
\begin{align}
	\restr{(\phi,q,\u,\CC)}{t=0} = (\phi_0 , q_0, \u_0, \CC_0), \quad \restr{\p_n\phi}{\p\Omega}=\restr{\p_n\mu}{\p\Omega}=\restr{\p_nq}{\p\Omega}=0,  \restr{\u}{\p\Omega} = \mathbf{0}, \restr{\p_n\CC}{\p\Omega} = \mathbf{0}. \label{eq:bc}
\end{align}

In what follows we formulate several assumptions on the model parameters.

\begin{Assumptions}\label{ass:regular}
	\hspace{1em} \\
	\vspace{-1em}
	\begin{itemize}
		\item The functions $m,n,h,\eta,\tau_b,A$ are continuous and positively bounded from above and below by $m_2,m_1$, respectively.
		\item We assume that $n^2(s)=m(s)$.
		\item We assume that  $A\in C^1(\mathbb{R})$  with $\norm*{A^\prime}_{L^\infty(\mathbb{R})}\leq A^\prime, A'\geq 0.$
		\item We assume that $F\in C^2(\mathbb{R})$ with constants $c_{1,i}, c_{2,i} > 0,\; i=1,\ldots,2$ and $c_4\geq 0$ such that:
		\begin{align*}
			|F^{(i)}(x)| \leq c_{1,i}|x|^{p-i} + c_{2,i},\; i=0,1,2 \textnormal{ and } p\geq 2, \quad F(x) \geq -c_3, \quad F^{\prime\prime}(x) \geq - c_4.
		\end{align*}
	\end{itemize}
\end{Assumptions}
An example of such a potential is the well-known Ginzburg-Landau potential $F(\phi)=\phi^2(1-\phi)^2$.
For the relative energy proof we require in addition to Assumption \ref{ass:regular}  the following set of assumptions.
\begin{Assumptions}\label{ass:rel}
	\hspace{1em} \\
	\vspace{-1em}
	\begin{itemize}
		\item The functions $m,n,h,\tau_b,A$ are $C^1(\mathbb{R})$ and the $L^\infty$-norm of their derivatives are bounded.
		\item We assume that $A\in C^2(\mathbb{R})$ with $\norm*{A^{\prime\prime}}_{L^\infty(\mathbb{R})}< \infty$.
		\item We assume that $F\in C^3(\mathbb{R})$ with $ p\leq 4$ and $\snorm*{F^{\prime\prime\prime}(x)}\leq c_{1,3}\snorm*{x}+c_{2,3},\quad c_{1,3},c_{2,3}\geq 0$.
	\end{itemize}
\end{Assumptions}

\section{Preliminaries}

In this section we introduce a suitable notation and recall some well-known analytical tools. The space-time cylinder and the intermediate space-time cylinder are denoted by $\Omega_T:=\Omega\times(0,T), \Omega_t:=\Omega\times(0,t),$ for  $t\in[0,T],$  respectively.
For the standard Lebesgue spaces $L^p(\Omega)$ the norm is denoted by $\norm*{\cdot}_p$. Further, we denote the space of divergence free functions by
\begin{equation*}
	L^2_{\text{div}}(\Omega)^d:= \overline{C_{0,\text{div}}^\infty(\Omega)^d}^{\norm*{\cdot}_2}. 
\end{equation*}
 We use the standard notation for the Sobolev spaces and introduce the notation $	V:=H^1_{0,\text{div}}(\Omega)^d,  H:=L^2_{\text{div}}(\Omega)^d.$
The space $V$ is equipped with the norm $\norm*{\cdot}_V:=\norm*{\na\cdot}_2$.
We denote the norms of the Bochner space $L^p(0,T;L^q(\Omega))$ by $\norm*{\cdot}_{L^p(L^q)}.$

\begin{Lemma}[\cite{LukacovaMedvidova.2015}]
	\label{lemm:hana}
	Let $d=2$ and {$\CC:\Omega\to\mathbb{R}^{2\times 2}$} be a symmetric tensor and let {$\u:\Omega\to\mathbb{R}^2$} be a solenoidal vector field. Then the following identity holds true
	\begin{equation}
		\label{eq:hanastress}
		\trC\CC:\na\u = \frac{1}{2}\Big[(\na\u)\CC + \CC(\na\u)^T \Big]:\CC.
	\end{equation}
\end{Lemma}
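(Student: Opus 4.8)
The identity is a purely algebraic/pointwise statement, so I would forget about the PDE context and just work at a fixed point $x\in\Omega$. Write $\CC=\CC^\top$ for the symmetric $2\times2$ tensor and let $\GG:=\na\u$, so $\mathrm{tr}(\GG)=\div{\u}=0$. The claim is
\[
\tr{\CC}\,\CC:\GG=\tfrac12\big[\GG\CC+\CC\GG^\top\big]:\CC.
\]
First I would expand the right-hand side: using $A:B=\mathrm{tr}(AB^\top)$ and symmetry of $\CC$, one has $(\GG\CC):\CC=\mathrm{tr}(\GG\CC\CC)$ and $(\CC\GG^\top):\CC=\mathrm{tr}(\CC\GG^\top\CC)=\mathrm{tr}(\GG^\top\CC\CC)=\mathrm{tr}(\GG\CC\CC)$ by cyclicity and transposition invariance of the trace. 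Hence the RHS equals $\mathrm{tr}(\GG\CC^2)$, and the whole lemma reduces to the scalar identity $\mathrm{tr}(\CC)\,\mathrm{tr}(\GG\CC)=\mathrm{tr}(\GG\CC^2)$, valid for any $2\times2$ matrix $\GG$ with $\mathrm{tr}(\GG)=0$ and any symmetric $\CC$.

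To prove that reduced identity I would invoke the Cayley–Hamilton theorem in dimension two: $\CC^2=\mathrm{tr}(\CC)\,\CC-\det(\CC)\,\I$. Multiplying by $\GG$ and taking the trace gives $\mathrm{tr}(\GG\CC^2)=\mathrm{tr}(\CC)\,\mathrm{tr}(\GG\CC)-\det(\CC)\,\mathrm{tr}(\GG)$, and the last term vanishes precisely because $\u$ is solenoidal. This is exactly the desired equality, so the proof is complete. (Alternatively one could just diagonalize the symmetric $\CC$ in an orthonormal basis and check both sides componentwise against the eigenvalues $\lambda_1,\lambda_2$, using $G_{11}+G_{22}=0$; this is elementary but slightly messier.)

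There is no real obstacle here — the only thing to be careful about is the bookkeeping of transposes and the order of factors inside the traces (the contraction $A:B$ is $\sum_{ij}A_{ij}B_{ij}=\mathrm{tr}(AB^\top)$, not $\mathrm{tr}(AB)$, though symmetry of $\CC$ makes several of these coincide), and the fact that Cayley–Hamilton in the form used is special to $d=2$, which is why the hypothesis $d=2$ is essential and the identity genuinely fails in three dimensions.
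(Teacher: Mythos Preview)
Your argument is correct: reducing both sides to traces via $A:B=\mathrm{tr}(AB^\top)$ and the symmetry of $\CC$ gives $\mathrm{tr}(\GG\CC^2)$ on the right and $\mathrm{tr}(\CC)\,\mathrm{tr}(\GG\CC)$ on the left, and the two-dimensional Cayley--Hamilton identity $\CC^2=\mathrm{tr}(\CC)\,\CC-\det(\CC)\,\I$ closes the gap exactly when $\mathrm{tr}(\GG)=\div{\u}=0$. The paper itself does not prove this lemma but merely cites it from \cite{LukacovaMedvidova.2015}, so there is no in-paper proof to compare against; your Cayley--Hamilton route is clean and makes explicit why the result is genuinely two-dimensional.
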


\begin{Theorem}[\cite{Brunk.b}]
	\label{theo:ex}
	Given the initial data  $\left(\phi_0,q_0,\u_0,\CC_0 \right) \in [H^1(\Omega)\times L^2(\Omega) \times H \times L^2(\Omega)^{2\times2}].$
	Let Assumptions~\ref{ass:regular} hold. Then for any $T>0$ there exists a global weak solution $(\phi,q,\mu,\u,\CC)$ of (\ref{eq:full_model}) in the following sense	
	\begin{align}
		&\phi \in L^{\infty}(0,T;H^1(\Omega))\cap L^2(0,T;H^3(\Omega)),& & q\I,\CC\in L^\infty(0,T;L^2(\Omega)^{2\times 2})\cap L^2(0,T;H^1(\Omega)^{2\times 2}), \nonumber \\ \nonumber
		&\u\in L^{\infty}(0,T;L^2(\Omega)^2)\cap L^2(0,T;V),& & \Delta\phi,A(\phi)q, \mu\in L^2(0,T;H^1(\Omega)),\\ \nonumber
	\end{align}
	\vspace{-3em}
	\begin{equation}
		\frac{\partial \phi}{\partial t} \in L^2(0,T;(H^{1}(\Omega))^*),\qquad \frac{\partial q\I}{\partial t},\frac{\partial \CC}{\partial t} \in L^{4/3}(0,T;(H^{1}(\Omega)^{2\times 2})^*),\qquad \frac{\partial \u}{\partial t} \in L^2(0,T;V^*), \label{reg1}
	\end{equation}
	and for any test-function $(\psi,\zeta,\xi,\vv,\DD)\in [H^1(\Omega)^3\times V \times H^1(\Omega)^{2\times 2}]$ and a.e. $t\in(0,T)$ we have
	\begin{align}
		&\int_\Omega\frac{\partial \phi}{\partial t}\psi \dx + \int_\Omega\(\u\cdot\na\phi\)\psi\dx + \int_\Omega m(\phi)\nabla\mu\cdot\nabla\psi  \dx - \int_\Omega n(\phi)\nabla\big(A(\phi)q\big)\cdot\na\psi \dx = 0 \nonumber\\
		&\int_\Omega\frac{\partial q   }{\partial t}\zeta \dx + \int_\Omega\(\u\cdot\na q\)\zeta\dx + \int_\Omega\frac{q\zeta}{\tau_b(\phi)}\dx + \int_\Omega\Big(\nabla\big(A(\phi)q\big)- n(\phi)\nabla\mu\Big)\cdot\na\big(A(\phi)\zeta\big)\dx + \int_\Omega \varepsilon_1\nabla q\cdot \nabla\zeta \dx=0 \nonumber\\
		&\int_\Omega\mu\xi \dx - \int_\Omega c_0\nabla\phi\cdot\nabla\xi \dx - \int_\Omega F^{\prime}(\phi)\xi \dx = 0 \label{eq:weak_reg}\\
		&\int_\Omega\frac{\partial\u   }{\partial t}\cdot\vv \dx + \b(\u,\u,\vv) + \int_\Omega\eta(\phi)\nabla\u:\nabla\vv \dx + \int_\Omega\trC\CC:\nabla\vv \dx - \int_\Omega\mu\nabla\phi\cdot\vv \dx = 0 \nonumber\\
		&\int_\Omega\frac{\partial\CC  }{\partial t}:\DD \dx + \BB(\u,\CC,\DD) - \int_\Omega\left[(\nabla\u)\CC + \CC(\nabla\u)^T\right]:\DD \dx + \int_\Omega\varepsilon_2\nabla\CC:\nabla\DD \dx =-\int_\Omega h(\phi)\trC[\trC\CC-\I]:\DD \dx.  \nonumber
	\end{align}
	The weak solution satisfies the initial data $\left(\phi(0),q(0),\u(0),\CC(0) \right) = \left(\phi_0,q_0,\u_0,\CC_0 \right).$  	Moreover the energy inequality
	\begin{align}
		&\left(\int_\Omega \frac{c_0}{2}\snorm{\nabla\phi(t)}^2 + F(\phi(t)) + \frac{1}{2}|q(t)|^2 +\frac{1}{2}\snorm{\u(t)}^2 + \frac{1}{4}\snorm*{\CC(t)}^2 \dx\right) \nonumber \\
		\leq &-\iQt \snorm*{n(\phi)\nabla\mu-\na\Big(A(\phi)q\Big)}^2+ \frac{1}{\tau_b(\phi)}q^2 +\varepsilon_1 \snorm*{\na q}^2 + \eta(\phi)\snorm*{\Du}^2  + \frac{\varepsilon_2}{2}\snorm*{\nabla\CC}^2 + \frac{1}{2} h(\phi)\snorm*{\trC\,\CC}^2 \dx\ddta \label{eq:energyineq}\\
		& + \frac{1}{2}\iQt h(\phi)\snorm*{\trC}^2 \dx\ddta +\left(\int_\Omega \frac{c_0}{2}\snorm{\nabla\phi(0)}^2 + F(\phi(0)) + \frac{1}{2}|q(0)|^2 +\frac{1}{2}\snorm{\u(0)}^2 + \frac{1}{4}\snorm*{\CC(0)}^2 \dx\right) \nonumber
	\end{align}
 holds for almost all $t\in(0,T).$
\end{Theorem}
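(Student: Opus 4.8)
The plan is to obtain the weak solution by a Galerkin approximation, to derive uniform a priori bounds from the energy structure, and to pass to the limit by compactness.

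\emph{Approximation.} First I would discretize in space by Galerkin bases adapted to the boundary conditions in \eqref{eq:bc}: the eigenfunctions of $-\Delta$ with homogeneous Neumann conditions for the scalar fields $\phi,q,\mu$; the same scalar eigenfunctions tensored with a fixed basis of symmetric $2\times2$ matrices for $\CC$, so that symmetry of the conformation tensor is preserved along the flow (which is needed for Lemma \ref{lemm:hana}); and the eigenfunctions of the Stokes operator for $\u$. Projecting \eqref{eq:full_model} onto the $k$-dimensional spaces and using the constitutive relations to eliminate $\mu$ and $\TT$ leads to a system of ordinary differential equations whose right-hand side is continuous in the unknowns, so a local Galerkin solution $(\phi_k,q_k,\u_k,\CC_k)$ exists; the a priori bound below then extends it to $[0,T]$.

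\emph{Energy estimate.} This is the core of the proof. Test the discrete $\phi$-equation with $\mu_k$ (and the $\mu$-relation with $\p_t\phi_k$), the $q$-equation with $q_k$, the momentum equation with $\u_k$, and the $\CC$-equation with $\tfrac12\CC_k$. Upon summation: the convective terms vanish because $\div{\u_k}=0$; the two mixed Cahn--Hilliard/bulk-stress contributions coming from the $\phi$- and $q$-equations, together with $\int m(\phi_k)|\na\mu_k|^2$ and $\int|\na(A(\phi_k)q_k)|^2$, assemble into the perfect square $\int|n(\phi_k)\na\mu_k-\na(A(\phi_k)q_k)|^2$, precisely because $n^2=m$; and the elastic stress work $\int\trr{\CC_k}\CC_k:\na\u_k$ from the momentum equation cancels the stretching term $-\int[(\na\u_k)\CC_k+\CC_k(\na\u_k)^\top]:\tfrac12\CC_k$ by Lemma \ref{lemm:hana}. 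What remains is exactly the differential form of the identity underlying \eqref{eq:energyineq}; the only term without a favourable sign, $\tfrac12\int h(\phi_k)|\trr{\CC_k}|^2$, is controlled by $C\|\CC_k\|_2^2$ and hence by the energy itself. Using $F\ge-c_3$ to bound the energy from below and Gronwall's lemma yields, uniformly in $k$, the bounds of the regularity class stated in the theorem; elliptic regularity applied to $-c_0\Delta\phi_k=\mu_k-F'(\phi_k)$, together with the growth conditions on $F$ in Assumption \ref{ass:regular} and two-dimensional Sobolev embeddings, provides the uniform $L^2(0,T;H^3(\Omega))$ bound for $\phi_k$, while $\na(A(\phi_k)q_k)\in L^2(L^2)$ gives $A(\phi_k)q_k\in L^2(H^1)$.

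\emph{Compactness and limit.} Testing each equation against an arbitrary element of $H^1(\Omega)$ (resp.\ $V$) and estimating every term by H\"older's inequality and the two-dimensional interpolation $\|f\|_4\lesssim\|f\|_2^{1/2}\|\na f\|_2^{1/2}$ --- the latter controlling the quadratic stress $\trr{\CC_k}\CC_k$ and the cubic reaction term $h(\phi_k)\trr{\CC_k}[\trr{\CC_k}\CC_k-\I]$ --- gives the bounds on $\p_t\phi_k$, $\p_t q_k$, $\p_t\CC_k$, $\p_t\u_k$ in the spaces of \eqref{reg1}. Banach--Alaoglu yields weakly(-$*$) convergent subsequences, and the Aubin--Lions--Simon lemma gives strong convergence $\phi_k\to\phi$ in $L^2(0,T;H^2(\Omega))$ (whence $\na\phi_k\to\na\phi$ and $\Delta\phi_k\to\Delta\phi$ strongly in $L^2(L^2)$ and $\phi_k\to\phi$ a.e.), together with $\u_k\to\u$, $q_k\to q$, $\CC_k\to\CC$ in $L^2(L^2)$. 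A.e.\ convergence of $\phi_k$ and continuity and boundedness of $m,n,h,\eta,\tau_b,A,A'$ handle the coefficient nonlinearities via dominated convergence, and every quadratic/cubic term --- $\u_k\cdot\na\phi_k$, $(\u_k\cdot\na)\u_k$, $(\u_k\cdot\na)\CC_k$, $\trr{\CC_k}\CC_k$, $h(\phi_k)\trr{\CC_k}[\trr{\CC_k}\CC_k-\I]$, $F'(\phi_k)$ --- passes to the limit by pairing a strongly convergent factor with a weakly convergent one; the cross-diffusion term is split as $n(\phi_k)\na(A(\phi_k)q_k)=n(\phi_k)A'(\phi_k)q_k\na\phi_k+n(\phi_k)A(\phi_k)\na q_k$ and treated factor by factor, the strong $H^2$-convergence of $\phi_k$ being what makes the first summand converge. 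This identifies the limit as a weak solution in the sense of \eqref{eq:weak_reg}; the bounds on the time derivatives give $\phi,q\I,\u,\CC\in C_w([0,T];\cdot)$, so the initial data are attained. Finally \eqref{eq:energyineq} follows by integrating the discrete energy identity over $(0,t)$ and taking $\liminf_{k\to\infty}$: the energy at time $t$ and the dissipative quadratic terms on the right are weakly lower semicontinuous and hence survive with the correct sign, while the $h(\phi_k)|\trr{\CC_k}|^2$ term and the initial energy converge.

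\emph{Main obstacle.} The delicate point is the energy estimate: one has to test the conformation equation with $\tfrac12\CC_k$ (rather than with the variational derivative of the logarithmic elastic energy) so that the non-dissipative Hamiltonian part closes, and then invoke the genuinely two-dimensional identity of Lemma \ref{lemm:hana} to cancel the elastic stress work against the stretching terms; simultaneously the cubic nonlinearity $h(\phi)|\CC|^2\CC$ and the quadratic stress must be kept under control, which is possible in two dimensions thanks to Ladyzhenskaya's inequality and the parabolic smoothing provided by $\varepsilon_1,\varepsilon_2>0$ and the fourth-order Cahn--Hilliard structure, with the non-convexity of $F$ absorbed through $F''\ge-c_4$ and Gronwall. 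In three dimensions Lemma \ref{lemm:hana} is unavailable and these estimates no longer close, which is why the corresponding three-dimensional results in this paper are stated conditionally.
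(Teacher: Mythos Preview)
The paper does not prove this theorem at all; it is quoted verbatim from the cited reference \cite{Brunk.b} and used as a black box for the relative-energy analysis that follows. There is therefore no ``paper's own proof'' to compare against.

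That said, your sketch is a faithful outline of the argument one expects to find in \cite{Brunk.b}: Galerkin approximation with Neumann/Stokes eigenfunctions, the energy identity obtained by testing with $(\mu_k,q_k,\u_k,\tfrac12\CC_k)$, and Aubin--Lions compactness. You correctly isolate the two structural cancellations that make the estimate close --- the perfect square $\int|n(\phi_k)\nabla\mu_k-\nabla(A(\phi_k)q_k)|^2$ assembled from the cross-diffusion terms via $n^2=m$, and the elastic-work/stretching cancellation via the two-dimensional identity of Lemma~\ref{lemm:hana} --- and you handle the sole non-dissipative remainder $\tfrac12\int h(\phi_k)|\trr{\CC_k}|^2$ by Gronwall. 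One point worth flagging: the $L^2(0,T;H^3(\Omega))$ bound on $\phi$ via elliptic regularity needs $F'(\phi)\in L^2(0,T;H^1(\Omega))$, which requires the growth restriction $p\le 4$; the paper makes this explicit in the Remark immediately following the theorem, so strictly speaking it is not a consequence of Assumption~\ref{ass:regular} alone.
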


\begin{Remark}[Regularity]
	In \cite{Brunk.} it is shown that $\phi\in L^2(0,T;H^3(\Omega)).$  This a priori bound is obtained from the elliptic regularity theory by observing that $\mu$, $F'(\phi)$ is bounded in $L^2(0,T;H^1(\Omega))$ for a suitable growth of $F'$.
  A sufficient condition is to restrict the potential growth in Assumptions~\ref{ass:regular} to $p\leq 4$. Note that the Ginzburg-Landau potential typically used in physical applications satisfies this condition. \\
 {By a more careful inspection one can show that $\frac{\partial\CC  }{\partial t}\in \Big(L^{2}(0,T;H^1(\Omega)^{2\times 2})\cap L^4(0,T;L^4(\Omega)^{2\times 2})\Big)^*$. Indeed,
 it is clear that $\mathbf{C} \in L^4(0,T; L^4(\Omega)^{2\times 2})$, since
 $L^2(0,T; H^1(\Omega)^{2\times 2}) \cap L^\infty(0,T; L^2(\Omega)^{2\times 2}) \hookrightarrow L^4(0,T; L^4(\Omega)^{2\times 2}).$
 Consequently, applying the H\"older inequality we obtain that the term $\| (\nabla \mathbf{u}) \mathbf{C}\|_{L^{4/3}(L^{4/3})}$ is bounded as well. Thus, by a comparison in  equation
 \eqref{eq:weak_reg}$_5$ we observe that
 $$
 \frac{\partial \CC}{\partial t} \in  \Big(L^{2}(0,T; (H^1(\Omega)^{2\times 2})^*) \cap L^{4/3}(0,T;L^{4/3}(\Omega)^{2\times 2})\Big) =
 \Big(L^{2}(0,T;H^1(\Omega)^{2\times 2})\cap L^4(0,T;L^4(\Omega)^{2\times 2})\Big)^*.
 $$ }
 \end{Remark}

\begin{Remark}
Let us explain the conceptual difference between the total energy $E_{tot}$ from \eqref{eq:free_energy} and the energy functional
$\int_\Omega \tfrac{c_0}{2}\snorm{\nabla\phi(t)}^2 + F(\phi(t)) + \tfrac{1}{2}|q(t)|^2 +\tfrac{1}{2}\snorm{\u(t)}^2 + \tfrac{1}{4}\snorm*{\CC(t)}^2 \dx$ from \eqref{eq:energyineq}. The total energy $E_{tot}$ dissipates in time, i.e,
\begin{align}
	E_{tot}(t) + &\int_0^t \snorm*{n(\phi)\nabla\mu-\na\big(A(\phi)q\big)}^2+ \frac{1}{\tau_b(\phi)}q^2 +\varepsilon_1 \snorm*{\na q}^2 + \eta(\phi)\snorm*{\Du}^2 \notag\\
	&+ \frac{\varepsilon_2}{2}\snorm{\na\trC}^2 + \frac{\varepsilon_2}{2}\snorm{\CC^{-1/2}\na\CC\CC^{-1/2}}^2 + \frac{1}{2}h(\phi)\trC^2\mathrm{tr}[\TT+\TT^{-1}-2\I] \leq E_{tot}(0)
\end{align}
 which is consistent with the Second Law of Thermodynamics. On the other hand, this is not necessarily  true for the functional in \eqref{eq:energyineq}, {due to the term $h(\phi)|\trC|^2$ on the right-hand side. By inspection of the dissipation term of the total energy one can see that this term is part of a more complex dissipation mechanism involving the stress tensor $\TT$. Hence, this functional plays the role of an additional Lyapunov functional for a two-dimensional model \eqref{eq:full_model}.} 
Note that in order to define properly the total energy $E_{tot}$  the positive definiteness of $\CC$ is required, while it is not needed  for the energy in \eqref{eq:energyineq}. Consequently, in two-dimensional model \eqref{eq:full_model} we may profit from    an additional energy inequality \eqref{eq:energyineq} in order to derive suitable a priori estimates and prove the existence of the corresponding weak solutions \cite{Brunk.b, 2a}. We refer a reader to our recent  work \cite{brunk2021existence}, where we have studied positive definiteness of $\CC$ for three-dimensional Navier-Stokes-Peterlin system, i.e. the Navier-Stokes equations with the evolution equation \eqref{eq:full_model}$_4$ for the conformation tensor $\CC$,   and proved the existence and conditional weak-strong uniqueness result.
\end{Remark}

\begin{Remark}\label{rem:3dweak}
In three space dimensions we expect a similar existence result to hold true for the viscoelastic phase separation model.
To prove the existence of weak solutions
to three-dimensional model \eqref{eq:full_model} we can only apply  energy inequality
\eqref{eq:free_energy} instead of \eqref{eq:energyineq}. For three-dimensional Navier-Stokes-Peterlin model the existence of weak solutions  was proven in \cite{brunk2021existence}.
Combining the techniques from \cite{brunk2021existence} with those of \cite{Brunk.b} we expect that the existence proof can be extended to the rest submodel for $(\phi,q,\mu)$.
Consequently, we assume the existence of weak solutions with the following regularity for $0<\delta\ll 1$
	\begin{align*}
		&\phi \in L^{\infty}(0,T;H^1(\Omega))\cap L^2(0,T;H^2(\Omega)),& & q\I,\CC\in L^\infty(0,T;L^2(\Omega)^{2\times 2})\cap L^2(0,T;H^1(\Omega)^{2\times 2}),\\
		&\u\in L^{\infty}(0,T;H)\cap L^2(0,T;V),& & \Delta\phi,A(\phi)q, \mu\in L^{5/3}(0,T;W^{1,5/3}(\Omega)),\\
		&n(\phi)\na\mu-\na(A(\phi)q) \in L^2(0,T;L^2(\Omega)),
	\end{align*}
	\vspace{-1em}
	\begin{align*}
		\frac{\partial \phi}{\partial t} &\in L^2(0,T;(H^{1}(\Omega))^*),& \frac{\partial q}{\partial t} &\in L^{1+\delta}(0,T;(W^{1,5/2}(\Omega))^*),& \frac{\partial \CC}{\partial t} &\in L^{4/3}(0,T;(H^{1}(\Omega)^{3\times 3})^*),& \frac{\partial \u}{\partial t} \in L^{4/3}(0,T;V^*).
	\end{align*}

The loss of regularity of $q$ and correspondingly of the higher derivative of $\phi$ is expected, since the uniform bounds for $\na\mu$ follow from the bounds of $\na(A(\phi)q)$. The latter are weaker due to different exponents depending on the space dimension in the interpolation inequalities. A rigorous proof of the existence of weak solutions for three-dimensional model \eqref{eq:full_model} goes beyond the scope of the present paper and is a topic of our future work.	
\end{Remark}

\begin{Definition}
	\label{defn:moreweak}
	A quadruple $(\psi,Q,\pi,\U,\HH)$ is called a \emph{more regular weak solution} if it is a weak solution, cf.~\eqref{reg1}, \eqref{eq:weak_reg}, and additionally enjoys the regularity
	\begin{align}
		\HH &\in L^4(0,T;L^\infty(\Omega)^{2\times 2}), \nonumber\\
		Q &\in L^4(0,T;L^\infty(\Omega))\cap L^4(0,T;W^{1,6}(\Omega)), &\quad \frac{\partial Q}{\partial t} &\in L^2(0,T;(H^{1}(\Omega))^*),\nonumber\\
		\U &\in L^4(0,T;L^\infty(\Omega)^2)
\cap L^2(0,T;W^{1,3}(\Omega)^2), &\quad \frac{\partial\U}{\partial t} &\in L^2(0,T;V^*),
	\end{align}
	\vspace{-2em}
	\begin{align*}
		\int_0^T &
		\norm*{n(\psi)\na\pi - \na\big(A(\psi)Q \big) }_4^4 +\norm*{\mathrm{div}\big[n(\psi)\na\pi - \na\big(A(\psi)Q\big)\big]}_2^2 + \norm*{\div{\U\psi-m(\psi)\na\pi+n(\psi)\na\big(A(\psi)Q\big)}}_2^2 \d t \leq C.
	\end{align*}

\end{Definition}

\begin{Definition}
	\label{defn:strong}
	We call $(\phi,q,\u,\CC)$ a \emph{strong solution} of \eqref{eq:full_model} if it is a weak solution, i.e. \eqref{reg1}, (\ref{eq:weak_reg}) holds, and it enjoys the additional regularity
	\begin{align*}
		\phi\in &\,L^\infty(0,T;H^2(\Omega))\cap L^2(0,T;H^4(\Omega)),&\quad \u\in &\,L^\infty(0,T;V)\cap L^2(0,T;H^2(\Omega)^2\cap V), \\
		q\I,\CC\in &\,L^\infty(0,T;H^1(\Omega)^{2\times 2})\cap L^2(0,T;H^2(\Omega)^{2\times 2}),& \quad \mu, A(\phi)q \in &\,L^2(0,T;H^2(\Omega)), \\
		\frac{\partial \phi}{\partial t}, \frac{\partial q}{\partial t},& \frac{\partial \u}{\partial t}, \frac{\partial \CC}{\partial t} \in L^2(0,T;L^2(\Omega)),& \quad \frac{\partial \na\phi}{\partial t} \in &\,L^2(0,T;(H^{1}(\Omega)^2)^*).
	\end{align*}	
\end{Definition}

\begin{Remark}
Clearly, a strong solution in the sense of Definition \ref{defn:strong} is a more regular weak solution, cf. Definition \ref{defn:moreweak}.
\end{Remark}

\begin{Lemma}[Gronwall]
	\label{lem:gronwall}
	Let $t\geq0$ and $f\in L^1(t_0,T)$ be non-negative and $g,\phi$ continuous functions on $[t_0,T]$. If $\phi$ satisfies
	$$
		\phi(t) \leq g(t) + \int_{t_0}^t f(s)\phi(s) \mathrm{d}s \qquad \text{ for all } t\in[t_0,T]
    $$
    then
    $$
		\phi(t) \leq g(t) + \int_{t_0}^t f(s)g(s)\exp{\(\int_s^t f(\tau) \mathrm{d}\tau\)} \mathrm{d}s \qquad \text{ for all } t\in[t_0,T].
	$$
\end{Lemma}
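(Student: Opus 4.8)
The plan is to reduce the integral inequality to a first-order linear differential inequality for the running integral and then apply an integrating factor. First I would set $R(t):=\int_{t_0}^t f(s)\phi(s)\,\mathrm{d}s$. Since $f\in L^1(t_0,T)$ and $\phi$ is continuous, hence bounded on the compact interval $[t_0,T]$, the integrand $f\phi$ lies in $L^1(t_0,T)$; thus $R$ is absolutely continuous on $[t_0,T]$ with $R(t_0)=0$ and $R'(t)=f(t)\phi(t)$ for a.e.\ $t$. Combining this with the hypothesis $\phi\leq g+R$ and $f\geq0$ gives the a.e.\ differential inequality $R'(t)-f(t)R(t)\leq f(t)g(t)$.

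Next I would introduce the integrating factor $E(t):=\exp\!\big(-\int_{t_0}^t f(\tau)\,\mathrm{d}\tau\big)$, which is absolutely continuous and strictly positive on $[t_0,T]$ with $E'(t)=-f(t)E(t)$ for a.e.\ $t$. Since the product of two absolutely continuous functions is again absolutely continuous, the function $t\mapsto R(t)E(t)$ is absolutely continuous, and for a.e.\ $t$
\[
  \frac{\mathrm{d}}{\mathrm{d}t}\big(R(t)E(t)\big)=\big(R'(t)-f(t)R(t)\big)E(t)\leq f(t)g(t)E(t).
\]
Integrating this inequality from $t_0$ to $t$ and using $R(t_0)E(t_0)=0$ yields $R(t)E(t)\leq\int_{t_0}^t f(s)g(s)E(s)\,\mathrm{d}s$. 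Dividing by $E(t)>0$ and using the identity $E(s)/E(t)=\exp\!\big(\int_s^t f(\tau)\,\mathrm{d}\tau\big)$ gives
\[
  R(t)\leq\int_{t_0}^t f(s)g(s)\exp\!\Big(\int_s^t f(\tau)\,\mathrm{d}\tau\Big)\,\mathrm{d}s,
\]
and the claimed estimate follows by invoking the hypothesis once more in the form $\phi(t)\leq g(t)+R(t)$.

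The only genuinely delicate point is that the manipulations which are routine for smooth $f$ must be justified for $f$ merely in $L^1$: one has to argue the absolute continuity of $R$, of $E$, and of their product so that the a.e.\ differential inequality may be integrated legitimately, rather than appealing to a classical derivative. An alternative route that avoids this bookkeeping altogether is to iterate the hypothesis directly: by induction the $k$-fold iteration contributes the term $\int_{t_0}^t \frac{1}{(k-1)!}\big(\int_s^t f(\tau)\,\mathrm{d}\tau\big)^{k-1}f(s)g(s)\,\mathrm{d}s$, and summing over $k\geq1$ (with the interchange of sum and integral justified by monotone or dominated convergence, using a uniform bound for $g$ on the compact interval $[t_0,T]$) reproduces exactly the exponential kernel $\exp\!\big(\int_s^t f(\tau)\,\mathrm{d}\tau\big)$. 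I expect the absolute-continuity justification to be the main obstacle, and I would likely present the iteration argument since it handles the $L^1$ setting most transparently and requires no monotonicity of $g$.
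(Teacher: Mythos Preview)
Your argument is correct; both the integrating-factor route and the iteration alternative are standard and valid proofs of this form of Gronwall's inequality, and you have correctly identified the absolute-continuity bookkeeping as the only point needing care in the $L^1$ setting. Note, however, that the paper does not actually prove this lemma at all: it is stated as a classical result and used as a tool, so there is no proof in the paper to compare against.
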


\begin{Proposition}[\cite{Temam.1997}, \cite{LukacovaMedvidova.2015}]
	For any open set $\Omega\subset\mathbb{R}^d, d=2,3,$ it holds that the forms
	\begin{equation*}
		\b(\u,\vv,\w)\equiv \int_\Omega (\u\cdot\na)\vv\cdot \w \dx  \text{ and } \;\; \BB(\u,\CC,\DD)\equiv \int_\Omega (\u\cdot\na)\CC:\DD \dx
	\end{equation*}
	are continuous and trilinear on $V\times V\times V$ and $V\times H^1(\Omega)^{d \times d}\times H^1(\Omega)^{d \times d}$, respectively. Further the following properties hold
	\begin{align*}
		\b(\u,\u,\vv) = - \b(\u,\vv,\u), \;\;\;\;\;\u \in V, \vv\in H^1_0(\Omega)^d,\qquad \BB(\u,\CC,\DD) = -\BB(\u,\DD,\CC),  \;\;\;\;\;\u \in V, \CC,\DD \in H^1(\Omega)^{d \times d}.
	\end{align*}
\end{Proposition}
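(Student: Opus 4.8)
The plan is to prove the three assertions in turn: trilinearity (which is immediate), continuity (Hölder's inequality together with a Sobolev embedding), and the two antisymmetry identities (integration by parts plus a density argument). For trilinearity I would only remark that in $\b(\u,\vv,\w)=\int_\Omega(\u\cdot\na)\vv\cdot\w\dx$ the integrand depends linearly on $\u$ and on $\w$ directly and on $\vv$ through the linear operator $\na$, and likewise for $\BB$; linearity of the integral then gives the claim. For continuity I would apply Hölder's inequality with exponents $(4,2,4)$, so that $\snorm*{\b(\u,\vv,\w)}\le\norm*{\u}_4\norm*{\na\vv}_2\norm*{\w}_4$ and $\snorm*{\BB(\u,\CC,\DD)}\le\norm*{\u}_4\norm*{\na\CC}_2\norm*{\DD}_4$, and then invoke the Sobolev embedding $H^1(\Omega)\hookrightarrow L^4(\Omega)$, which is valid for $d\le 4$ and in particular for $d=2,3$. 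On $V$ the Poincaré inequality makes $\norm*{\na\cdot}_2$ equivalent to the full $H^1$-norm, so one obtains $\snorm*{\b(\u,\vv,\w)}\le C\norm*{\u}_V\norm*{\vv}_V\norm*{\w}_V$ and the analogous estimate for $\BB$ on $V\times H^1(\Omega)^{d\times d}\times H^1(\Omega)^{d\times d}$.

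For the antisymmetry I would first treat $\u\in C^\infty_{0,\mathrm{div}}(\Omega)^d$. Using $\di{\u}=0$ one has the pointwise identity
\begin{equation*}
	(\u\cdot\na)\vv\cdot\w+(\u\cdot\na)\w\cdot\vv=\u\cdot\na(\vv\cdot\w)=\div{(\vv\cdot\w)\u},
\end{equation*}
and since $\u$ is compactly supported the divergence theorem forces $\int_\Omega\div{(\vv\cdot\w)\u}\dx=0$, i.e. $\b(\u,\vv,\w)=-\b(\u,\w,\vv)$. Then I would pass to a general $\u\in V$ by approximating it in $V$ by functions in $C^\infty_{0,\mathrm{div}}(\Omega)^d$ (dense in $V$ by definition of $V$) and using the continuity just established; specializing $(\vv,\w)\mapsto(\u,\vv)$ yields $\b(\u,\u,\vv)=-\b(\u,\vv,\u)$. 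The matrix identity $\BB(\u,\CC,\DD)=-\BB(\u,\DD,\CC)$ follows by the same steps with the scalar $\CC:\DD$ in place of $\vv\cdot\w$ and the pointwise identity $(\u\cdot\na)\CC:\DD+(\u\cdot\na)\DD:\CC=\div{(\CC:\DD)\u}$; here $\CC,\DD$ need only lie in $H^1(\Omega)^{d\times d}$, not in $H^1_0$, because the boundary contribution already vanishes once $\u=\mathbf{0}$ on $\p\Omega$.

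The one point requiring care — and the step I would treat most carefully — is the integration by parts at the stated regularity: I must check that $(\vv\cdot\w)\u$ (respectively $(\CC:\DD)\u$) genuinely belongs to $W^{1,1}(\Omega)$ with compact support so that the divergence theorem applies. This holds because $\vv,\w\in H^1(\Omega)^d\hookrightarrow L^4(\Omega)^d$ gives $\vv\cdot\w\in L^2(\Omega)$ and $\na(\vv\cdot\w)=(\na\vv)\w+\vv(\na\w)\in L^{4/3}(\Omega)$, and multiplication by the smooth compactly supported $\u$ preserves integrability and localizes the support; the density-plus-continuity step then transfers the identity from $C^\infty_{0,\mathrm{div}}$ to all of $V$. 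Everything else is routine bookkeeping.
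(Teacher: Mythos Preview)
Your argument is correct and is the standard proof one finds in the references the paper cites. The paper itself does not give a proof of this Proposition at all; it simply records the result with citations to \cite{Temam.1997} and \cite{LukacovaMedvidova.2015}, so there is nothing in the paper to compare your approach against beyond noting that you have supplied the routine details the authors chose to omit.
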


\section{Relative energy}
In this section we will introduce the notion of relative energy for our viscoelastic phase separation model (\ref{eq:full_model}).  Focusing on the phase-field equation the usual approach in the framework of the Allen-Cahn equation \cite{Hosek.2019} and the Cahn-Hilliard equation \cite{Boyer.1999} is to neglect the potential $F(\phi)$ in $(\ref{eq:energyineq})_1$ and build a relative energy only on the gradient part. However, due to the cross-diffusive coupling of the volume fraction $(\ref{eq:weak_reg})_1$ and the bulk stress equation $(\ref{eq:weak_reg})_2$ in the energy inequality (\ref{eq:energyineq}) this is not applicable in our model.
Furthermore, in the context of phase separation the potential $F$ is non-convex. Similarly to  \cite{LattanzioC.andTzavarasA.E..} the potential can be convexified by adding a quadratic penalty term. \\
We define the relative energy $\mathcal{E}(\phi,q,\u,\CC\vert \psi,Q,\U,\HH)$ of a weak solution $(\phi,q,\u,\CC)$ of  (\ref{eq:weak_reg}) and the functions $(\psi,Q,\U,\HH)$ as
\begin{align}
	&\mathcal{E}(\phi,q,\u,\CC\vert \psi,Q,\U,\HH) = \mathcal{E}_{mix}(\phi\vert\psi) + \mathcal{E}_{bulk}(q\vert Q) + \mathcal{E}_{kin}(\u\vert\U) + \mathcal{E}_{el}(\CC\vert\HH), \label{eq:relen}\\
	&\mathcal{E}_{mix}(\phi\vert\psi) = \int_\Omega \frac{c_0}{2}\snorm*{\na\phi-\na\psi}^2 + F(\phi) - F(\psi) - F^\prime(\psi)(\phi-\psi) + a(\phi-\psi)^2 \dx, \;a \geq 0,\nonumber\\
	&\mathcal{E}_{bulk}(q\vert Q) = \int_\Omega \frac{1}{2}\snorm*{q-Q}^2 \dx,\; \mathcal{E}_{kin}(\u\vert\U) = \int_\Omega \frac{1}{2}\snorm*{\u-\U}^2 \dx,\; \mathcal{E}_{el}(\CC\vert\HH) = \int_\Omega \frac{1}{4}\snorm*{\CC-\HH}^2 \dx.\nonumber
\end{align}

The penalty parameter $a\geq 0$ in $\mathcal{E}_{mix}$ is chosen such that the relative energy is non-negative. Consequently, the  potential together with the penalty term will be convex. Using the Taylor expansion we can find
\begin{equation}
	\label{eq:entay}
	F(\phi) - F(\psi) - F^\prime(\psi)(\phi-\psi) = \frac{F^{\prime\prime}(z)}{2}(\phi-\psi)^2 \geq -\frac{c_4}{2}(\phi-\psi)^2.
\end{equation}
Note that $z$ is a suitable convex combination of $\phi$ and $\psi$. Here we have used the fact that the second derivative of $F$ is bounded from below by $-c_4$ for $c_4>0$, cf. Assumptions~\ref{ass:regular}. Considering the full mixing relative energy $\mathcal{E}_{mix}$ we can find
\begin{equation*}
	\mathcal{E}_{mix}(\phi\vert\psi) \geq \int_\Omega \frac{c_0}{2}\snorm*{\na\phi-\na\psi}^2 + \left(a-\frac{c_4}{2}\right)(\phi-\psi)^2 \dx
\end{equation*}
which is non-negative for $2a\geq c_4$. Furthermore, one can observe that for $2a > c_4$
\begin{equation}
	\mathcal{E}(\phi,q,\u,\CC\vert \psi,Q,\U,\HH) = 0 \Longleftrightarrow \phi=\psi, q=Q, \u=\U, \CC=\CC \text{ a.e. in } \Omega. \label{eq:relativeunique}
\end{equation}

In what follows we formulate several helpful results which will be needed later.

\begin{Lemma}
	\label{lem:easy_control}
	Let Assumptions \ref{ass:regular} hold. {Further, let $\phi, \psi \in  L^\infty(0,T; H^1(\Omega)) \cap L^2(0,T; H^2(\Omega)),$ $Q \in
L^4(0,T; L^\infty(\Omega)) \cap L^4(0,T; W^{1,6}(\Omega)).$}
Then the integral
	\begin{equation*}
		I_1:=\int_0^t\norm*{\na\Big((A(\phi) - A(\psi))Q\Big)}_2^2 \ddta
	\end{equation*}
	can be bounded by the relative energy as follows:
	\begin{align*}
		I_1&\leq c\int_0^t \norm*{\phi-\psi}_6^2\norm*{\na Q}_3^2 + \norm*{\na\phi-\na\psi}_2^2\norm*{Q}_\infty^2 + (\norm*{\na\phi}_3^2+\norm*{\na\psi}_3^2)\norm*{\phi-\psi}_6^2\norm*{Q}^2_\infty \ddta \\
		&\leq c\int_0^t \mathcal{E}_{mix}(\dta)\Big[\norm*{\na Q}_3^2 + \norm*{Q}_\infty^2 + (\norm*{\na\phi}_3^2+\norm*{\na\psi}_3^2)\norm*{Q}^2_\infty\Big]  \ddta.
	\end{align*}
\end{Lemma}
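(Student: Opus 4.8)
\emph{Plan of proof.} The plan is to expand the gradient by the Leibniz rule and estimate each resulting term by Hölder's inequality together with the Lipschitz bounds on $A$ and $A'$ from the Assumptions. Concretely, I would write
\[
\na\big((A(\phi)-A(\psi))Q\big) = (A(\phi)-A(\psi))\,\na Q \;+\; Q\,A'(\phi)(\na\phi-\na\psi)\;+\; Q\,(A'(\phi)-A'(\psi))\,\na\psi,
\]
using $\na(A(\phi)-A(\psi)) = A'(\phi)(\na\phi-\na\psi) + (A'(\phi)-A'(\psi))\na\psi$. For the first term, $|A(\phi)-A(\psi)|\le\|A'\|_\infty|\phi-\psi|$ together with Hölder with exponents $(6,3)$ gives an $L^2$-bound $c\,\|\phi-\psi\|_6\,\|\na Q\|_3$. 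For the second term, the uniform bound on $A'$ and the $L^\infty$-bound on $Q$ give $c\,\|Q\|_\infty\,\|\na\phi-\na\psi\|_2$. For the third term, the Lipschitz continuity of $A'$ (the bound $\|A''\|_\infty<\infty$) yields $|A'(\phi)-A'(\psi)|\le\|A''\|_\infty|\phi-\psi|$, and Hölder with exponents $(\infty,6,3)$ gives $c\,\|Q\|_\infty\,\|\phi-\psi\|_6\,\|\na\psi\|_3$; the crude bound $\|\na\psi\|_3^2\le\|\na\phi\|_3^2+\|\na\psi\|_3^2$ (equivalently, using the symmetric split with $A'(\psi)$ and $\na\phi$) recovers the symmetric form of the stated estimate. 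Squaring, applying $(a+b+c)^2\le3(a^2+b^2+c^2)$, and integrating over $(0,t)$ yields the first inequality.

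For the second inequality I would invoke the coercivity of $\mathcal{E}_{mix}$ recorded above: for $2a>c_4$ one has $\mathcal{E}_{mix}(\tau)\ge c\,\big(\|\na\phi-\na\psi\|_2^2+\|\phi-\psi\|_2^2\big)\ge c\,\|\phi-\psi\|_{H^1}^2$, hence also $\mathcal{E}_{mix}(\tau)\ge c\,\|\phi-\psi\|_6^2$ by the Sobolev embedding $H^1(\Omega)\hookrightarrow L^6(\Omega)$ (valid for $d=2,3$). Substituting these lower bounds for $\|\na\phi-\na\psi\|_2^2$ and $\|\phi-\psi\|_6^2$ in the three terms and factoring out $\mathcal{E}_{mix}(\tau)$ gives the claim.

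The computation is essentially routine; the only points requiring attention are (a) that the chosen Hölder exponents be compatible with the assumed regularity — $\na Q\in L^4(0,T;L^3(\Omega))$ from $Q\in L^4(0,T;W^{1,6}(\Omega))$, $\na\phi,\na\psi\in L^2(0,T;L^3(\Omega))$ from $\phi,\psi\in L^2(0,T;H^2(\Omega))$ via $H^2\hookrightarrow W^{1,3}$ (in $d=2$ even $W^{1,p}$ for every $p<\infty$), and $Q\in L^4(0,T;L^\infty(\Omega))$ — so that the weight multiplying $\mathcal{E}_{mix}$ in the final estimate is integrable in time, which is what is actually needed when this lemma is later fed into a Gronwall argument; and (b) that the third term genuinely uses the bound on $A''$, i.e. the second-order regularity of $A$, rather than merely $A\in C^1$.
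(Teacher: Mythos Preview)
Your proposal is correct and follows exactly the route the paper indicates: the paper's own proof is the single sentence ``The proof is done by expanding the gradient and applying the H\"older and Young inequalities,'' and your Leibniz-rule expansion into three pieces with H\"older exponents $(6,3)$, $(\infty,2)$, and $(\infty,6,3)$ is precisely this computation fleshed out. Your observation in point (b) is also apt: the third term does require the bound on $A''$ from Assumptions~\ref{ass:rel}, not merely Assumptions~\ref{ass:regular} as stated in the lemma, so the hypothesis of the lemma is slightly understated in the paper.
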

The proof is done be expanding the gradient and applying the Hölder and Young inequalities. {Clearly, due to the assumed regularity of
$\phi, \psi, Q$ the last term in the bracket is $L^1$- integrable in time, which will be used latter to derive the relative energy inequality, cf.~\eqref{eq:rel_gron_a}, \eqref{eq:rel_gron}.}

\begin{Lemma}
	\label{lem:timeid}
	Let $(\phi,q,\u,\CC)$ be a global weak solution satisfying \eqref{reg1}, (\ref{eq:weak_reg}) and $(\psi,Q,\U,\HH)$ arbitrary functions satisfying
	\begin{align}
		\psi &\in L^2(0,T;H^3(\Omega))\cap H^1(0,T;(H^{1}(\Omega))^*),&  Q &\in L^4(0,T;H^1(\Omega))\cap H^1(0,T;(H^1(\Omega))^*),\label{eq:wsutimereg}\\
		\U &\in L^4(0,T;V)\cap H^1(0,T;V^*), &{\HH} &\in L^4(0,T;L^4(\Omega)^{2\times 2})\cap L^{2}(0,T;H^1(\Omega)^{2\times 2}),\nonumber\\
		{ \frac{\partial \HH}{\partial t}} & {\in \Big(L^{2}(0,T;H^1(\Omega)^{2\times 2})\cap L^4(0,T;L^4(\Omega)^{2\times 2})\Big)^*}.& & \nonumber
\end{align}
	Then the following identities hold for a.a. $t,s\in(0,T)$	
	\begin{align*}
		\int_\Omega \na\phi(t)\cdot\na\psi(t) - \na\phi(s)\cdot\na\psi(s) \dx &= -\int_s^t \int_\Omega \frac{\partial \phi(\dta)}{\partial t}\Delta\psi(\dta) + \Delta\phi(\dta)\frac{\partial \psi(\dta)}{\partial t} \dx\,\mathrm{d}\dta, \\
		\int_\Omega \phi(t)\psi(t) - \phi(s)\psi(s) \dx &= \int_s^t \int_\Omega \frac{\partial \phi(\dta)}{\partial t}\psi(\dta) + \phi(\dta)\frac{\partial \psi(\dta)}{\partial t} \dx\,\mathrm{d}\dta, \\
		\int_\Omega q(t) Q(t) - q(s)Q(s) \dx &= \int_s^t \int_\Omega \frac{\partial q(\dta)}{\partial t}Q(\dta) + q(\dta)\frac{\partial Q(\dta)}{\partial t} \dx\,\mathrm{d}\dta, \\
		\int_\Omega \u(t)\cdot\U(t) - \u(s)\cdot\U(s) \dx &= \int_s^t \int_\Omega \frac{\partial \u(\dta)}{\partial t}\cdot\U(\dta) + \u(\dta)\cdot\frac{\partial \U(\dta)}{\partial t} \dx\,\mathrm{d}\dta, \\
		\int_\Omega \CC(t):\HH(t) - \CC(s):\HH(s) \dx &= \int_s^t \int_\Omega \frac{\partial \CC(\dta)}{\partial t}:\HH(\dta) + \CC(\dta):\frac{\partial \HH(\dta)}{\partial t} \dx\,\mathrm{d}\dta. 	
	\end{align*}
\end{Lemma}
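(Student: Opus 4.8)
The plan is to establish each of the five identities as an instance of the standard integration-by-parts (product rule) formula in time for a pairing of a function in $L^p(0,T;X)$ with time derivative in $L^{p'}(0,T;X^*)$ against a function in $L^{p'}(0,T;X)$ with time derivative in $L^p(0,T;X^*)$; the subtlety is only in checking that the integrability exponents and the duality pairings match up in each line. First I would recall the abstract fact: if $f \in L^p(0,T;X)$ with $\partial_t f \in L^{p'}(0,T;X^*)$ and $g \in L^{p'}(0,T;X)$ with $\partial_t g \in L^p(0,T;X^*)$, then $t \mapsto \langle f(t), g(t)\rangle$ is absolutely continuous on $[0,T]$ and
$$
\langle f(t),g(t)\rangle - \langle f(s),g(s)\rangle = \int_s^t \langle \partial_t f(\tau), g(\tau)\rangle + \langle \partial_t g(\tau), f(\tau)\rangle \,\mathrm d\tau.
$$
This is classical (see e.g. Temam \cite{Temam.1997} or Lions--Magenes); I would cite it and then apply it five times.

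The core of the write-up is the bookkeeping of function spaces. For the second identity take $X = H^1(\Omega)$, $f = \phi$: by \eqref{reg1} we have $\phi \in L^\infty(0,T;H^1(\Omega)) \hookrightarrow L^2(0,T;H^1(\Omega))$ with $\partial_t\phi \in L^2(0,T;(H^1(\Omega))^*)$, and $g = \psi$ has $\psi \in L^2(0,T;H^3(\Omega)) \hookrightarrow L^2(0,T;H^1(\Omega))$ with $\partial_t\psi \in L^2(0,T;(H^1(\Omega))^*)$ by \eqref{eq:wsutimereg}; both sides of the pairing are then in dual $L^2$-in-time spaces, so the formula applies. For the third identity take $X = H^1(\Omega)$, $f = q$ with $q \in L^2(0,T;H^1(\Omega))$ (from Theorem \ref{theo:ex}, since $q\I \in L^2(0,T;H^1)$) and $\partial_t q$: here one must be slightly careful because Theorem \ref{theo:ex} only gives $\partial_t(q\I) \in L^{4/3}(0,T;(H^1)^*)$; however $Q$ and $\partial_t Q$ from \eqref{eq:wsutimereg} satisfy $Q \in L^4(0,T;H^1(\Omega))$ and $\partial_t Q \in L^2(0,T;(H^1)^*)$, and the Hölder-conjugate pairs $(4/3,4)$ and $(2,2)$ are exactly matched, so the abstract lemma (in its asymmetric-exponent form) still applies. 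For the fourth identity take $X = V$: $\u \in L^2(0,T;V)$, $\partial_t\u \in L^2(0,T;V^*)$ from Theorem \ref{theo:ex}, and $\U \in L^4(0,T;V) \hookrightarrow L^2(0,T;V)$, $\partial_t\U \in L^2(0,T;V^*)$ from \eqref{eq:wsutimereg}. For the fifth, $\CC$-identity, take $X = H^1(\Omega)^{2\times 2} \cap L^4(\Omega)^{2\times 2}$ normed as an intersection: by the Regularity Remark $\partial_t\CC \in \big(L^2(0,T;H^1)\cap L^4(0,T;L^4)\big)^*$ while $\CC$ itself lies in $L^2(0,T;H^1)\cap L^4(0,T;L^4)$, and $\HH$ together with $\partial_t\HH$ satisfy the mirror-image assumptions in \eqref{eq:wsutimereg}; again the two pairings are in honest dual pairs.

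The first identity is the one genuinely distinct case, since it pairs $\na\phi$ with $\na\psi$ rather than $\phi$ with $\psi$ and produces Laplacians on the right. The clean route is: apply the abstract product rule to $f = \phi$, $g = -\Delta\psi$ with $X = H^1(\Omega)$ — note $\Delta\psi \in L^2(0,T;H^1(\Omega))$ since $\psi \in L^2(0,T;H^3(\Omega))$, and $\partial_t(\Delta\psi) = \Delta(\partial_t\psi) \in L^2(0,T;(H^1)^*)$ as $\partial_t\psi \in L^2(0,T;(H^1)^*)$ implies $\Delta\partial_t\psi \in L^2(0,T;(H^{-1})^*) = L^2(0,T;(H^1)^*)$ after one integration by parts in space — giving $\int_\Omega \phi(-\Delta\psi)\,\mathrm dx$ at the two endpoints and $\int_s^t \langle\partial_t\phi, -\Delta\psi\rangle + \langle -\Delta\partial_t\psi, \phi\rangle$. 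Then one integrates by parts in space in both endpoint terms and in the second integrand, using the homogeneous Neumann boundary conditions \eqref{eq:bc} on $\phi$ (and the assumed compatibility on $\psi$), to rewrite $\int_\Omega \phi(-\Delta\psi)\,\mathrm dx = \int_\Omega \na\phi\cdot\na\psi\,\mathrm dx$ and $\langle -\Delta\partial_t\psi,\phi\rangle = \langle \partial_t\psi, -\Delta\phi\rangle$ (interpreting the latter via the $H^2$-regularity of $\phi$ from \eqref{reg1}, so $\Delta\phi$ is a genuine $L^2$-in-time $H^1$-element paired against $\partial_t\psi \in (H^1)^*$). This yields exactly the claimed formula.

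\emph{Main obstacle.} The only real subtlety — and the point I would be most careful to spell out — is that several of the time derivatives on the weak-solution side live in spaces with time-exponent $4/3$ (namely $\partial_t(q\I)$ and $\partial_t\CC$) rather than $2$, so the naive symmetric integration-by-parts lemma does not literally apply; one must invoke the version with conjugate exponents $p,p'$ and verify in each line that the regularity imposed on $(\psi,Q,\U,\HH)$ in \eqref{eq:wsutimereg} is precisely the $L^{p'}$-in-time, $L^p$-in-time-for-the-derivative companion. For the $\CC$-line this forces the use of the intersection space $L^2(H^1)\cap L^4(L^4)$ and its dual, which is exactly why the Regularity Remark was proved beforehand; I would point explicitly to that remark at this step. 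The boundary-term manipulations in the first identity are routine given \eqref{eq:bc} and cause no trouble.
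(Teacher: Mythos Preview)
Your proposal is substantially more detailed than the paper's own proof, which is a one-line reference to \cite{Emmrich.2018}: ``One can follow the procedure in \cite{Emmrich.2018} to prove such integration by parts formulas by approximation with smooth functions and corresponding limiting process.'' So there is essentially nothing to compare against except the indicated method --- mollify in time, apply the classical product rule, and pass to the limit using the stated regularities.

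Your route --- invoking a ready-made abstract product formula with matched H\"older exponents and then doing the space-by-space bookkeeping --- is correct in spirit and makes the role of each regularity assumption in \eqref{eq:wsutimereg} transparent, which is valuable. Two points are worth tightening. First, in the first identity the step ``$\Delta\partial_t\psi \in L^2(0,T;(H^{-1})^*) = L^2(0,T;(H^1)^*)$'' is garbled: $(H^{-1})^*$ is $H^1_0$ by reflexivity, not $(H^1)^*$, so you cannot place $\partial_t(\Delta\psi)$ in $L^2((H^1)^*)$ this way and then invoke your abstract lemma with $X=H^1$ for the pair $(\phi,-\Delta\psi)$. You do land on the right interpretation at the end (pair $\partial_t\psi\in L^2((H^1)^*)$ against $\Delta\phi\in L^2(H^1)$, using $\phi\in L^2(H^3)$), but the clean justification of the full identity --- including that $t\mapsto\int_\Omega\nabla\phi\cdot\nabla\psi\,\mathrm dx$ is absolutely continuous --- really does come from the approximation argument rather than a direct application of the Gelfand-triple lemma. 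Second, for the $\CC,\HH$ line your ``$X=H^1\cap L^4$'' framing is outside the usual $L^p(X)$/$L^{p'}(X^*)$ statement of the abstract lemma; again the mollification-and-limit proof covers this case without extra work. In short: your verification of the exponent pairings is the real content and is correct; where the abstract lemma is strained (first and fifth identities), falling back to the approximation argument the paper cites is the simplest fix.
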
	
\begin{proof}
	\rm{One can follow the procedure in \cite{Emmrich.2018} to prove such integration by parts formulas by approximation with a smooth functions and corresponding limiting process.}
\end{proof}

The following result for the relative energy holds for the viscoelastic phase separation model \eqref{eq:full_model} in two space dimensions.
\begin{tcolorbox}
	\begin{Theorem}[Relative energy, $d=2$]
		\label{theo:relative_energy}
		Let $(\phi,q,\mu,\u,\CC)$ be a global weak solution satisfying \eqref{reg1}, (\ref{eq:weak_reg}) starting from the initial data $(\phi_0,q_0,\u_0,\CC_0)$. Let $(\psi,Q,\pi,\U,\HH)$ be a more regular weak solution in the sense of Definition \ref{defn:moreweak} on $(0,T^\dagger); T\leq T^\dagger,$ starting from the initial data $(\psi_0,Q_0,\U_0,\HH_0)$. Further, let Assumptions~\ref{ass:rel} hold.\\
		Then the relative energy given by (\ref{eq:relen}) satisfies the inequality
		\begin{align}
			\mathcal{E}(t) + b\mathcal{D} \leq \mathcal{E}(0) +  \int_0^t g(\dta)\mathcal{E}(\dta) \ddta \label{eq:relgron}
		\end{align}
		for almost all $t\in(0,T^\dagger)$. Here $g\in L^1(0,T^\dagger)$ is given by (\ref{eq:rel_gron}) and $b>0$. Moreover $\mathcal{D}$ is given by
		\begin{align}
			\mathcal{D} =& \iQt \Big| n(\phi)(\na\mu-\na\pi) - \na\Big(A(\phi)(q-Q) \Big) \Big|^2 +  \frac{1}{\tau_b(\phi)}(q-Q)^2  + \varepsilon_1 \snorm*{\na q - \na Q}^2 \dx\ddta\label{eq:Dfull}\\
			+&\iQt \eta(\phi)\snorm*{\Du - \mathrm{D}\U}^2  + \frac{1}{2}h(\phi)\trC^2(\CC-\HH)^2  + \frac{\varepsilon_2}{2}\snorm*{\na\CC-\na\HH}^2 \dx\ddta. \nonumber
		\end{align}	
	\end{Theorem}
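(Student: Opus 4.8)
The plan is to derive a "relative energy inequality" by combining the energy inequality \eqref{eq:energyineq} for the weak solution with the (strong/more regular) equations satisfied by $(\psi, Q, \U, \HH)$, together with carefully chosen cross-terms. Concretely, I would write
\[
\mathcal{E}(t) = \Big[\text{energy of } (\phi,q,\u,\CC) \text{ at } t\Big] + \Big[\text{energy of } (\psi,Q,\U,\HH) \text{ at } t\Big] - \Big[\text{cross-terms at } t\Big] + a\|\phi-\psi\|_2^2,
\]
where the cross-terms are $\int_\Omega c_0 \na\phi\cdot\na\psi + qQ + \u\cdot\U + \tfrac12 \CC:\HH\,\dx$ plus the contributions coming from $F$ (namely $-\int F'(\psi)(\phi-\psi)$ must be differentiated in time using $\p_t\psi$ and $\p_t\phi$). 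For the first bracket I use \eqref{eq:energyineq}; for the second bracket I test the strong equations for $(\psi,Q,\U,\HH)$ against themselves (which, since it is a more regular weak solution, gives an energy \emph{equality} — or at least an inequality in the right direction); for the cross-terms I use the integration-by-parts-in-time identities of Lemma~\ref{lem:timeid}, then substitute the weak formulation \eqref{eq:weak_reg} for $(\phi,q,\u,\CC)$ with test functions built from $(\psi,Q,\U,\HH)$ (i.e.\ $\psi = -\Delta\psi$ style pairings, $\zeta = Q$, $\xi = $ appropriate multiples, $\vv = \U$, $\DD = \HH$) and the strong equations for $(\psi,Q,\U,\HH)$ with test functions built from $(\phi,q,\u,\CC)$.

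The next step is the bookkeeping: after assembling all these pieces, one collects the dissipative quadratic terms into $\mathcal{D}$ as given by \eqref{eq:Dfull}. The key algebraic point is that the "diagonal" dissipation from \eqref{eq:energyineq} (e.g.\ $|n(\phi)\na\mu - \na(A(\phi)q)|^2$, $\eta(\phi)|\Du|^2$, etc.) combines with the diagonal dissipation of the strong solution and the mixed terms to produce \emph{squared differences} $|n(\phi)(\na\mu-\na\pi)-\na(A(\phi)(q-Q))|^2$, $\eta(\phi)|\Du-\mathrm{D}\U|^2$, and so on — this is why the coefficients $n(\phi),\eta(\phi),\tau_b(\phi)$ are evaluated at the \emph{weak} solution $\phi$. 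A coefficient $b>0$ appears because part of the would-be dissipation must be absorbed to handle error terms of the form $(n(\phi)-n(\psi))\na\pi$, etc., via Young's inequality, so one keeps, say, $b=\tfrac12$ of each dissipation term. Everything that is not a perfect square or a genuine dissipation term must be shown to be bounded by $\int_0^t g(\dta)\mathcal{E}(\dta)\,\ddta$ (possibly after absorbing a small multiple of $\mathcal{D}$), where $g\in L^1(0,T^\dagger)$; this is where the convexity-restoring penalty $a(\phi-\psi)^2$ in $\mathcal{E}_{mix}$ and the lower bound \eqref{eq:entay} are used to control the nonconvex $F$-contributions, and where the assumed higher regularity of the more regular weak solution (Definition~\ref{defn:moreweak}) and the $L^1$-in-time integrability noted after Lemma~\ref{lem:easy_control} enter.

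The main obstacle, as usual for weak-strong uniqueness of such strongly coupled cross-diffusion systems, is the cross-diffusive coupling between the $\phi$-equation and the $q$-equation: the terms involving $\na(A(\phi)q)$ appear in both \eqref{eq:weak_reg}$_1$ and \eqref{eq:weak_reg}$_2$, and their differences do not split cleanly. One must write $A(\phi)q - A(\psi)Q = A(\phi)(q-Q) + (A(\phi)-A(\psi))Q$ and similarly for $\na\mu-\na\pi$ (using $\mu = -c_0\Delta\phi + F'(\phi)$), then estimate the "error" pieces $\na((A(\phi)-A(\psi))Q)$ and $\na((F'(\phi)-F'(\psi))\cdots)$ by the relative energy via Lemma~\ref{lem:easy_control} and the analogous estimate for the $F$-terms — this is precisely where Assumptions~\ref{ass:rel} ($A\in C^2$, $F\in C^3$ with $p\le 4$, bounded derivatives of $m,n,h,\tau_b,A$) are indispensable, since one needs $\na^2$ of $A$ and $\na$ of $F''$ to control the cross-terms in $H^1$-type norms in two space dimensions. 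A secondary technical point is justifying all the integration-by-parts-in-time steps at the available regularity, which Lemma~\ref{lem:timeid} is designed to handle. Once all error terms are bounded by $C(\mathcal{E}(\dta)[\,\text{$L^1$-in-time factor}\,] + \delta\mathcal{D})$, choosing $\delta$ small and $a$ with $2a>c_4$ yields \eqref{eq:relgron}, and Gronwall (Lemma~\ref{lem:gronwall}) then gives weak-strong uniqueness.
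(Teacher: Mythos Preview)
Your outline is correct and follows essentially the same route as the paper's proof in Section~5: decompose $\mathcal{E}(z\vert\bar z)(t)=E(z)(t)+E(\bar z)(t)-\mathcal{DE}(z\vert\bar z)(t)$, use the energy inequality \eqref{eq:energyineq} for both solutions, expand $\mathcal{DE}(0)-\mathcal{DE}(t)$ via Lemma~\ref{lem:timeid} and cross-testing in \eqref{eq:weak_reg}, recombine the diagonal and mixed dissipative terms into the squared differences of \eqref{eq:Dfull}, and absorb all remainder terms (grouped by the paper into a convective remainder $\mathcal{R}_{conv}$ and a cross-diffusive remainder $\mathcal{R}_{mix}$, with the Appendix lemma doing the algebraic reorganisation of the latter) by Young's inequality and Lemma~\ref{lem:easy_control}. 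Your identification of the cross-diffusion coupling as the main obstacle, of the role of Assumptions~\ref{ass:rel}, and of why the coefficients in $\mathcal{D}$ are evaluated at $\phi$ is exactly right.
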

	\vspace{-1em}
\end{tcolorbox}

\begin{Remark}
	The proof of Theorem \ref{theo:relative_energy} will be presented in Section 5 and consists of the following steps:
	\begin{enumerate}
		\item Suitable decomposition of the relative energy into the energy of the weak solution, the energy of the strong solution and a remainder term.
		\item Expansion of the remainder term by inserting the weak solution as test function for the strong solutions and vice versa.
		\item Derivation of the relative dissipation functional and estimates of all remaining terms by means of the relative energy and the relative dissipation to derive a Gronwall-type structure. Application of the Gronwall inequality.
	\end{enumerate}
\end{Remark}

Up to our best knowledge the question of relative energy estimates for full three-dimensional viscoelastic phase separation model (\ref{eq:full_model}) is an open problem in general, cf.~\cite{brunk2021existence}.  In what follows we formulate the result for the relative energy of the so-called \emph{reduced viscoelastic phase separation model} in three space dimensions. Hereby, we assume that viscoelastic effects are restricted only to the bulk stress $q\I$ omitting the conformation tensor $\CC$ from the model, i.e.
\begin{align}
	\label{eq:red_model}
	\begin{split}
		&\frac{\partial \phi}{\partial t} + \u\cdot\nabla\phi  = \dib{m(\phi)\nabla\mu} - \dib{n(\phi)\nabla\big(A(\phi)q\big)} \\
		&\frac{\partial q   }{\partial t} + \u\cdot\nabla q    = -\frac{1}{\tau_b(\phi)}q + A(\phi)\Delta\big(A(\phi)q\big) - A(\phi)\dib{n(\phi)\nabla\mu} + \varepsilon_1\Delta q\\
		&\frac{\partial\u   }{\partial t} + (\u\cdot\nabla)\u  = \dib{\eta(\phi)\Du} -\nabla p  + \nabla\phi\mu \\
		&\div{\u} = 0 \hspace{1cm}   \mu = -c_0\Delta\phi + F^\prime(\phi).
	\end{split}
\end{align}
For this reduced model the total energy \eqref{eq:free_energy} and the energy functional \eqref{eq:energyineq} coincide and we denote
\begin{align*}
E_{red}=\int_\Omega \frac{c_0}{2}\snorm{\na\phi}^2 + F(\phi) + \frac{1}{2}q^2 + \frac{1}{2}\snorm{\u}^2.
\end{align*}
Of course, this implies a reduced relative energy $\mathcal{E}_{red}$ by the same arguments as in \eqref{eq:relen}.
\begin{tcolorbox}
	\begin{Theorem}[Relative energy, reduced model, $d=3$]
		\label{theo:relative_energy3d}
		Let $(\phi,q,\mu,\u)$ be a global weak solution of the reduced viscoelastic phase separation model starting from the initial data $(\phi_0,q_0,\u_0)$. Let $(\psi,Q,\pi,\U)$ be a corresponding smooth solution on $(0,T^\dagger); T^\dagger\leq T,$ starting from the initial data $(\psi_0,Q_0,\U_0)$. Furthermore, let Assumptions~\ref{ass:rel} hold.\\
		Then the relative energy given in (\ref{eq:relen}) satisfies the inequality
		\begin{align}
			\mathcal{E}_{red}(t) + b\mathcal{D} \leq \mathcal{E}_{red}(0) +  \int_0^t g(\dta)\mathcal{E}_{red}(\dta) \ddta \label{eq:relgron3}
		\end{align}
		for almost all $t\in(0,T^\dagger)$. Here $g\in L^1(0,T^\dagger),$ see (\ref{eq:rel_gron}) for a precise expression, and $b>0$. Moreover $\mathcal{D}$ is given by
		\begin{align}
			\mathcal{D} &= \iQt \Big| n(\phi)(\na\mu-\na\pi) - \na\Big(A(\phi)(q-Q) \Big) \Big|^2  +  \frac{1}{\tau_b(\phi)}(q-Q)^2+ \varepsilon_1\snorm*{\na q - \na Q}^2 + \eta(\phi)\snorm*{\Du - \mathrm{D}\U}^2 \dx\ddta. \label{eq:Dfull3}
		\end{align}	
	\end{Theorem}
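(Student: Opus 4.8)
The plan is to follow the three-step scheme announced in the Remark after Theorem~\ref{theo:relative_energy}, applied now to the reduced system~\eqref{eq:red_model}. The decisive simplification in three dimensions is that the conformation tensor $\CC$ -- precisely the quantity whose low regularity obstructs the full model -- has been removed, so no elastic relative energy appears, the two-dimensional Lemma~\ref{lemm:hana} is not needed, and the comparison solution $(\psi,Q,\pi,\U)$ is assumed \emph{smooth}, supplying all the $L^\infty$-bounds required. \textbf{Step~1 (decomposition).} Expanding the squares in~\eqref{eq:relen} and using the Taylor identity~\eqref{eq:entay}, one writes $\mathcal{E}_{red}(t)$ as $E_{red}$ evaluated along the weak solution $(\phi,q,\u)$, plus the remaining quadratic and affine self-terms of the smooth solution, plus a bilinear mixed remainder collecting $-c_0\!\int_\Omega\na\phi\cdot\na\psi$, $-\!\int_\Omega qQ$, $-\!\int_\Omega\u\cdot\U$, the penalty $a\!\int_\Omega(\phi-\psi)^2$ and the terms affine in $F'(\psi)$.

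\textbf{Step~2 (evolution) and Step~3 (dissipation).} One applies the energy inequality~\eqref{eq:energyineq} (which for~\eqref{eq:red_model} reads $E_{red}(t)+\int_0^t(\cdots)\le E_{red}(0)$) to the weak-solution part, the corresponding energy \emph{equality} to the smooth self-terms, and the integration-by-parts identities of Lemma~\ref{lem:timeid} to the mixed remainder, then replaces each time derivative by means of the weak formulation analogous to~\eqref{eq:weak_reg} for $(\phi,q,\u)$ -- tested with $\Delta\psi$, with $Q$, with $\U$ -- and of the pointwise equations~\eqref{eq:red_model} for $(\psi,Q,\U)$ -- tested with $\Delta\phi$, with $q$, with $\u$ -- together with $\mu=-c_0\Delta\phi+F'(\phi)$ and $\pi=-c_0\Delta\psi+F'(\psi)$. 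Regrouping the quadratic contributions yields the relative dissipation~\eqref{eq:Dfull3}: because $n^2=m$ (Assumption~\ref{ass:regular}), the phase-field and bulk dissipations, once the coefficient errors are separated off, complete to the single square $|n(\phi)(\na\mu-\na\pi)-\na(A(\phi)(q-Q))|^2$; the viscous terms complete to $\eta(\phi)|\Du-\mathrm{D}\U|^2$, the relaxation terms to $\tau_b(\phi)^{-1}(q-Q)^2$, and the $\varepsilon_1$-terms to $\varepsilon_1|\na q-\na Q|^2$. It is essential never to isolate $\na\mu$, which in $d=3$ is not controlled individually -- only the combination $n(\phi)\na\mu-\na(A(\phi)q)$ lies in $L^2(L^2)$ -- but to keep it inside these combinations throughout.

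\textbf{Step~4 (error estimates).} The remaining terms are of three kinds: (i) coefficient differences $m(\phi)-m(\psi)$, $n(\phi)-n(\psi)$, $A(\phi)-A(\psi)$, $\eta(\phi)-\eta(\psi)$, $\tau_b(\phi)^{-1}-\tau_b(\psi)^{-1}$, which by Assumption~\ref{ass:rel} are $O(|\phi-\psi|)$; (ii) convective remainders involving $\b$, absorbed via its skew-symmetry; (iii) the potential remainder $F'(\phi)-F'(\psi)-F''(\psi)(\phi-\psi)$, controlled through the growth $|F'''|\le c_{1,3}|x|+c_{2,3}$ with $p\le 4$. Each is estimated by Hölder and Gagliardo--Nirenberg, the top-order factor -- a gradient of a difference, or the cross-diffusion flux difference -- being absorbed into $b\mathcal{D}$ with a small constant, leaving $\mathcal{E}_{red}(\tau)$ times a weight $g(\tau)$ assembled from norms of the smooth solution and from $\|\phi\|_{L^\infty(H^1)}$, $\|\phi\|_{L^2(H^2)}$; the contributions $\int(A(\phi)-A(\psi))Q\,(\cdots)$ are handled exactly as in Lemma~\ref{lem:easy_control}. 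In $d=3$ one splits, e.g., $\|\na(\phi-\psi)\|_3\le C\|\na(\phi-\psi)\|_2^{1/2}\|\phi-\psi\|_{H^2}^{1/2}$, pairing the $H^2$-factor with an $L^2$-in-time bound of the weak solution and the $L^2$-factor with an $L^\infty$-in-time norm, so that $g\in L^1(0,T^\dagger)$ precisely because $Q\in L^4(L^\infty)\cap L^4(W^{1,6})$ and $\U\in L^4(L^\infty)\cap L^2(W^{1,3})$.

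\textbf{Step~5 (conclusion) and main obstacle.} Choosing $a$ with $2a>c_4$ makes $\mathcal{E}_{red}$ equivalent to $\|\na(\phi-\psi)\|_2^2+\|\phi-\psi\|_2^2+\|q-Q\|_2^2+\|\u-\U\|_2^2$; collecting the estimates gives $\mathcal{E}_{red}(t)+b\mathcal{D}\le\mathcal{E}_{red}(0)+\int_0^tg(\tau)\mathcal{E}_{red}(\tau)\ddta$ with $g\in L^1(0,T^\dagger)$, and Gronwall's Lemma~\ref{lem:gronwall} finishes the proof. The genuine difficulty is the Step~4 bookkeeping: verifying that \emph{every} error weight is time-integrable in three dimensions, where $H^1\hookrightarrow L^6$ only, without ever dividing the cross-diffusion flux -- one must split each coefficient- and potential-error contribution so that the uncontrolled factor is exactly the one already present in $\mathcal{D}$, and exploit the smoothness of $(\psi,Q,\pi,\U)$, whose integrability exponents are calibrated just as the "more regular weak solution" class of Definition~\ref{defn:moreweak} is calibrated for $d=2$.
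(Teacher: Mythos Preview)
Your proposal is correct and follows essentially the same approach as the paper. The paper's own proof is in fact a single sentence: it observes that in the proof of Theorem~\ref{theo:relative_energy} all estimates involving $(\phi,\mu,q,\u)$ and $(\psi,\pi,Q,\U)$ were deliberately carried out using only inequalities valid in three dimensions (in particular $H^1\hookrightarrow L^6$), so the result follows by simply deleting every term containing $\CC$ or $\HH$; your Steps~1--5 are precisely a spelled-out version of that. One minor remark: the interpolation $\|\na(\phi-\psi)\|_3\le C\|\na(\phi-\psi)\|_2^{1/2}\|\phi-\psi\|_{H^2}^{1/2}$ you invoke in Step~4 is not actually needed---the paper never estimates $\|\na(\phi-\psi)\|_3$, only $\|\phi-\psi\|_6$ and $\|\na\phi-\na\psi\|_2$, both controlled directly by $\mathcal{E}_{mix}$, while factors like $\|\na\phi\|_3$ or $\|\na\phi\|_4$ appear undifferenced and are absorbed into $g(\tau)$ via the weak-solution regularity $\phi\in L^2(H^2)$.
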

\end{tcolorbox}

\begin{proof}
	\rm{In order to prove this result we will use only estimates for $\phi,\mu,q,\u$ and $\psi,\pi,Q,\U$ that are valid also in three space dimensions. The desired result then follows by neglecting the contributions of the conformation tensors $\CC$ and $\HH$ in the proof of Theorem~\ref{theo:relative_energy}.}
\end{proof}
\begin{Remark}
	For convenience we assumed that $(\psi,Q,\pi,\U)$ is a smooth solution.
In our recent paper \cite{brunk.new} it was shown that the required regularity of a smooth solution of a three-dimensional reduced viscoelastic phase separation model \ref{eq:red_model} is
{
	\begin{align}
	\psi &\in L^\infty(0,T;H^1(\Omega))\cap L^2(0,T;H^3(\Omega)),& \quad \frac{\partial\psi}{\partial t} &\in L^{5/2}(0,T;(W^{1,5/3}(\Omega))^*),\nonumber \\
	Q &\in L^\infty(0,T;L^2(\Omega)) \cap L^4(0,T;L^\infty(\Omega))\cap L^4(0,T;W^{1,6}(\Omega)), &\quad \frac{\partial Q}{\partial t} &\in L^2(0,T;(H^{1}(\Omega))^*),\nonumber\\
	\U &\in  L^\infty(0,T;H) \cap L^4(0,T;L^\infty(\Omega)^2)
	\cap L^2(0,T;W^{1,3}(\Omega)^2), &\quad \frac{\partial\U}{\partial t} &\in L^2(0,T;V^*),
\end{align}
\vspace{-2em}
\begin{align*}
	\int_0^T &
	\norm*{n(\psi)\na\pi - \na\big(A(\psi)Q \big) }_4^4 +\norm*{\mathrm{div}\big[n(\psi)\na\pi - \na\big(A(\psi)Q\big)\big]}_2^2 + \norm*{\div{\U\psi-m(\psi)\na\pi+n(\psi)\na\big(A(\psi)Q\big)}}_2^2 \d t \leq C.
\end{align*}}
\end{Remark}

\indent
Further, it was shown in \cite{brunk2021existence} that in order to use the relative energy for three-dimensional Navier-Stokes-Peterlin model, one
has to assume that $\CC\in L^s(0,T;L^r(\Omega))$ with $\frac{2}{s}+\frac{3}{{\cblue r}}\leq 1$ and $2<s<\infty$ and $3<r<\infty.$ Thus, in order to work with the relative energy for full three-dimensional viscoelastic phase separation model (\ref{eq:full_model}) the above additional regularity
for $\CC$ needs to be assumed as it is not known to be fulfilled by a weak solution.  Combination of Theorem~\ref{theo:relative_energy3d} with the relative energy result in \cite{brunk2021existence} implies the following result.
\begin{Corollary}[Conditional relative energy, $d=3$]\label{coro:cond}
Assume that $(\phi,q,\mu,\u, \CC)$ is a global weak solution of the viscoelastic phase separation model \eqref{eq:full_model} starting from the initial data $(\phi_0,q_0,\u_0, \CC_0)$, such that $\CC\in L^s(0,T;L^r(\Omega))$ with $\frac{2}{s}+\frac{3}{{\cblue r}}\leq 1$ and $2<s<\infty$ and $3<r<\infty.$
Let $(\psi, Q, \pi, \U, \HH)$ be sufficiently smooth solution of  \eqref{eq:full_model}
on $(0,T^\dagger); T^\dagger\leq T,$ starting from the initial data $(\psi_0,Q_0,\U_0, \HH_0)$. Furthermore, let Assumptions~\ref{ass:rel} hold.

	Using the modified relative energy
		\begin{align*}
		\mathcal{E}_{3d}&:= \mathcal{E}_{mix}(\phi|\psi) + \mathcal{E}_{bulk}(q|Q) + \mathcal{E}_{kin}(\u|\U) + \frac{1}{4}\norm{\trr{\CC-\HH}}^2 + \beta\mathcal{E}_{el}(\CC|\HH)
	\end{align*}
	for all $\beta>0$,
the following conditional relative energy estimate holds
	\begin{align*}
		\mathcal{E}_{3d}(t) + b\mathcal{D}_{3d} \leq \mathcal{E}_{3d}(0) +  \int_0^t g(\dta)\mathcal{E}_{3d}(\dta) \ddta
	\end{align*}
	for almost all $t\in(0,T^\dagger)$ with $b>0$ and a suitable $\mathcal{D}_{3d}$.
\end{Corollary}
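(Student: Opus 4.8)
\emph{Proof proposal.} The plan is to run the three‑step scheme indicated after Theorem~\ref{theo:relative_energy}, importing as much as possible from the two results being merged so that only the genuinely new couplings require work. First observe that the modified relative energy splits as
\[
\mathcal{E}_{3d} = \mathcal{E}_{red} + \tfrac14\norm*{\trr{\CC-\HH}}^2 + \beta\,\mathcal{E}_{el}(\CC|\HH), \qquad \mathcal{E}_{red} := \mathcal{E}_{mix}(\phi|\psi) + \mathcal{E}_{bulk}(q|Q) + \mathcal{E}_{kin}(\u|\U),
\]
where $\mathcal{E}_{red}$ is exactly the relative energy of the reduced model \eqref{eq:red_model}. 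Following step~1 of the scheme, each term of $\mathcal{E}_{3d}$ is written as the energy of the weak solution minus the energy of the smooth solution minus the corresponding linear cross term; differentiating in time and using the weak formulation \eqref{eq:weak_reg}, the smooth equations and the integration‑by‑parts identities of Lemma~\ref{lem:timeid} (available since $(\psi,Q,\pi,\U,\HH)$ is smooth), the contributions that involve only $(\phi,q,\mu,\u)$ and $(\psi,Q,\pi,\U)$ — the cross‑diffusive phase‑field/bulk‑stress block and the Navier--Stokes block without the polymeric stress $\di{\TT}$ — are estimated verbatim as in the proof of Theorem~\ref{theo:relative_energy3d}, since every interpolation inequality used there is valid for $d=3$. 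This yields the reduced dissipation $\mathcal{D}$ of \eqref{eq:Dfull3} together with a term $\int_0^t g_1(\dta)\,\mathcal{E}_{red}(\dta)\ddta$ with $g_1\in L^1(0,T^\dagger)$.

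Second, I would treat the conformation block $\tfrac14\norm*{\trr{\CC-\HH}}^2 + \beta\,\mathcal{E}_{el}(\CC|\HH)$ together with its coupling to the velocity through the polymeric stress. These are precisely the quantities controlled by the relative‑energy analysis of the three‑dimensional Navier--Stokes--Peterlin system in \cite{brunk2021existence}. Expanding $\td$ of this block, the transport terms cancel by the skew‑symmetry of $\b$ and $\BB$, the terms $(\na\u)\CC+\CC(\na\u)^\top$ pair against $\di{\TT}$ in the velocity equation, and one is left with a good dissipative contribution, roughly $\iQt \tfrac12 h(\phi)\trC^2\snorm*{\CC-\HH}^2 + \tfrac{\varepsilon_2\beta}{2}\snorm*{\na\CC-\na\HH}^2\dx\ddta$, plus cross terms of the type $\iQt \trr{\CC-\HH}\,(\na\U):(\CC-\HH)$, $\iQt (\na(\u-\U))\HH:(\CC-\HH)$ and terms carrying $\na\HH$ or $\HH$ in $L^\infty$‑type norms. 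Exactly as in \cite{brunk2021existence}, the hypothesis $\CC\in L^s(0,T;L^r(\Omega))$ with $\tfrac2s+\tfrac3r\le1$, $2<s<\infty$, $3<r<\infty$, together with the smoothness of $\HH$ and $\U$ and Sobolev--Ladyzhenskaya interpolation, bounds these cross terms by an arbitrarily small multiple of the kinetic and elastic dissipation plus $\int_0^t g_2(\dta)\big(\mathcal{E}_{kin}+\tfrac14\norm*{\trr{\CC-\HH}}^2+\mathcal{E}_{el}\big)(\dta)\ddta$ with $g_2\in L^1$; the $\beta$‑weighting of $\mathcal{E}_{el}$ is what makes the non‑dissipative Frobenius remainder small enough to be absorbed.

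Third, the only coupling absent from both references is the $\phi$‑dependence of $h$ in the relaxation term $h(\phi)\trC[\,\trC\CC-\I\,]$: one has to compare $h(\phi)$ with $h(\psi)$. Writing $h(\phi)-h(\psi)=\big(\int_0^1 h'(\psi+\theta(\phi-\psi))\dd\theta\big)(\phi-\psi)$ and using $\norm*{h'}_{L^\infty}<\infty$ from Assumptions~\ref{ass:rel}, the extra terms are controlled by $\norm*{\phi-\psi}$ in a Lebesgue norm times norms of $\trC^2$, $\HH$ and $\I$ supplied by the smooth solution and by the energy bounds for the weak solution, hence by $\int_0^t g_3(\dta)\,\mathcal{E}_{mix}(\dta)\ddta$ plus an arbitrarily small multiple of the elastic/trace dissipation. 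Collecting the three steps, choosing $\beta$ and $b$ small, setting $g:=g_1+g_2+g_3\in L^1(0,T^\dagger)$ and $\mathcal{D}_{3d}$ to be the sum of the reduced dissipation \eqref{eq:Dfull3} with the $\beta$‑weighted elastic/trace dissipation terms, the asserted inequality follows from the Gronwall Lemma~\ref{lem:gronwall}. The main obstacle is the second step: the $\CC$--$\u$ cross terms are supercritical for a generic weak solution in three dimensions, which is exactly why the condition $\CC\in L^sL^r$ cannot be dispensed with, and arranging the exponents so that the $\beta$‑weighted Frobenius relative energy absorbs the remaining non‑dissipative part is the delicate point that forces the statement to be conditional.
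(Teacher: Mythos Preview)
Your proposal is correct and follows essentially the same approach as the paper: the paper's own argument for this corollary is the single sentence ``Combination of Theorem~\ref{theo:relative_energy3d} with the relative energy result in \cite{brunk2021existence} implies the following result,'' and your three-step elaboration is precisely that combination spelled out, including the correct identification of the only genuinely new coupling (the $\phi$-dependence of $h$) and the role of the Serrin-type condition on $\CC$ in closing the conformation--velocity cross terms in three dimensions.
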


The above relative energy inequalities can be used to obtain the corresponding weak-strong uniqueness principles.

\begin{tcolorbox}
	\begin{Theorem}[Weak-strong uniqueness, $d=2$]
		\label{theo:wsu}
		Let $(\psi,Q,\U,\HH)$ be a strong solution of the viscoelastic phase separation model \eqref{eq:full_model}, cf. Definition \ref{defn:strong}. Furthermore, let the assumptions of Theorem \ref{theo:relative_energy} hold and initial data coincide, i.e. $\phi_0=\psi_0, q_0=Q_0, \u_0=\U_0, \CC_0=\HH_0$.
		Then any weak solution $(\phi,q,\u,\CC)$ coincides with the strong solution $(\psi,Q,\U,\HH)$ almost everywhere in $\Omega\times(0,T^\dagger)$.
	\end{Theorem}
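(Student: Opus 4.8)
The plan is to deduce weak-strong uniqueness directly from the relative energy inequality \eqref{eq:relgron} of Theorem~\ref{theo:relative_energy}. First I would observe that, by the Remark preceding Theorem~\ref{theo:relative_energy}, a strong solution in the sense of Definition~\ref{defn:strong} is in particular a more regular weak solution in the sense of Definition~\ref{defn:moreweak}; hence all hypotheses of Theorem~\ref{theo:relative_energy} are met and inequality \eqref{eq:relgron} holds with the function $g\in L^1(0,T^\dagger)$ from \eqref{eq:rel_gron} and a constant $b>0$. Since the initial data coincide, $\phi_0=\psi_0$, $q_0=Q_0$, $\u_0=\U_0$, $\CC_0=\HH_0$, we have $\mathcal{E}(0)=0$: indeed $\mathcal{E}_{mix}(\phi_0\vert\psi_0)$, $\mathcal{E}_{bulk}(q_0\vert Q_0)$, $\mathcal{E}_{kin}(\u_0\vert\U_0)$ and $\mathcal{E}_{el}(\CC_0\vert\HH_0)$ each vanish when the arguments agree almost everywhere (the $F$-terms cancel pointwise and the quadratic terms are zero).

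With $\mathcal{E}(0)=0$ and $b\mathcal{D}\geq 0$, inequality \eqref{eq:relgron} reduces to
\begin{align*}
	\mathcal{E}(t) \leq \int_0^t g(\dta)\,\mathcal{E}(\dta)\ddta \qquad \text{for a.a. } t\in(0,T^\dagger).
\end{align*}
Now I would apply the Gronwall lemma (Lemma~\ref{lem:gronwall}) with $\phi(t):=\mathcal{E}(t)$, $f:=g\in L^1(0,T^\dagger)$ non-negative, and $g(t):=0$ (the ``$g$'' of Lemma~\ref{lem:gronwall}, not to be confused with the integrand above); this immediately yields $\mathcal{E}(t)\leq 0$ for almost all $t\in(0,T^\dagger)$. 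Since $\mathcal{E}$ is by construction non-negative once the penalty parameter satisfies $2a\geq c_4$ (as arranged in the definition \eqref{eq:relen} and the discussion around \eqref{eq:entay}), we conclude $\mathcal{E}(t)=0$ for a.a.\ $t\in(0,T^\dagger)$.

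Finally, choosing the penalty parameter strictly, $2a>c_4$, the equivalence \eqref{eq:relativeunique} gives that $\mathcal{E}(t)=0$ forces $\phi(t)=\psi(t)$, $q(t)=Q(t)$, $\u(t)=\U(t)$, $\CC(t)=\HH(t)$ almost everywhere in $\Omega$, for almost every $t\in(0,T^\dagger)$; that is, the weak and strong solutions coincide almost everywhere in $\Omega\times(0,T^\dagger)$. (By the time-continuity of the weak solution components into the relevant dual spaces, this holds for every $t$ in that interval.) The only genuinely substantive ingredient is Theorem~\ref{theo:relative_energy} itself; given that result, the present theorem is a short and standard deduction, the sole point requiring care being the verification that a strong solution qualifies as a more regular weak solution and that $\mathcal{E}$ is indeed non-negative and vanishes exactly on the diagonal. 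I do not anticipate any real obstacle in this argument beyond bookkeeping.
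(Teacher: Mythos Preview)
Your proposal is correct and follows essentially the same approach as the paper: apply the relative energy inequality \eqref{eq:relgron} from Theorem~\ref{theo:relative_energy}, use that a strong solution is a more regular weak solution, observe $\mathcal{E}(0)=0$ from coinciding initial data, invoke the Gronwall lemma (Lemma~\ref{lem:gronwall}), and conclude via \eqref{eq:relativeunique}. The paper's own proof is even shorter, writing directly $\mathcal{E}(t)\leq \mathcal{E}(0)\exp\big(\int_0^t g(\tau)\,\mathrm{d}\tau\big)$ and then noting $\mathcal{E}(0)=0$; your version with $b\mathcal{D}\geq 0$ dropped first and Gronwall applied with zero inhomogeneity is the same argument.
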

\end{tcolorbox}
From the proof of Theorem~\ref{theo:relative_energy} it will be clear that the above result holds also for the more regular weak solutions in the sense of Definition \ref{defn:moreweak}.

\begin{tcolorbox}
	\begin{Theorem}[Weak-strong uniqueness, $d=3$]
		\label{theo:wsu3d}
		Let $(\psi,Q,\U)$ be a smooth solution of the reduced viscoelastic phase separation model. Furthermore, let the assumptions of Theorem \ref{theo:relative_energy3d} hold and initial data coincide, i.e. $\phi_0=\psi_0, q_0=Q_0, \u_0=\U_0$.
		Then weak solution $(\phi,q,\u)$ coincides with the smooth solution $(\psi,Q,\U)$ almost everywhere in $\Omega\times(0,T^\dagger)$.
	\end{Theorem}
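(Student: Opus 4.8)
\textbf{Proof proposal for Theorem~\ref{theo:wsu3d}.}

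The plan is to obtain the result as an immediate consequence of the relative energy inequality of Theorem~\ref{theo:relative_energy3d}, combined with the Gronwall lemma (Lemma~\ref{lem:gronwall}) and the coercivity of the reduced relative energy. First I would fix, once and for all, the penalty parameter $a$ appearing in $\mathcal{E}_{mix}$ so that $2a>c_4$; then, as recorded in \eqref{eq:relativeunique}, the reduced relative energy $\mathcal{E}_{red}=\mathcal{E}_{mix}(\phi\vert\psi)+\mathcal{E}_{bulk}(q\vert Q)+\mathcal{E}_{kin}(\u\vert\U)$ is non-negative and, at each fixed time, controls from above the quantity
\[
	\tfrac{c_0}{2}\norm*{\na\phi-\na\psi}_2^2+\bigl(a-\tfrac{c_4}{2}\bigr)\norm*{\phi-\psi}_2^2+\tfrac12\norm*{q-Q}_2^2+\tfrac12\norm*{\u-\U}_2^2,
\]
which is a genuine (squared) distance since $a-\tfrac{c_4}{2}>0$. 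In particular $\mathcal{E}_{red}(t)=0$ forces $\phi(t)=\psi(t)$, $q(t)=Q(t)$, $\u(t)=\U(t)$ a.e.\ in $\Omega$.

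Next, because the initial data coincide, $\phi_0=\psi_0$, $q_0=Q_0$, $\u_0=\U_0$, every summand of $\mathcal{E}_{red}(0)$ vanishes: the gradient term and the quadratic penalty term because $\phi_0-\psi_0\equiv 0$, the Taylor remainder $F(\phi_0)-F(\psi_0)-F^\prime(\psi_0)(\phi_0-\psi_0)\equiv 0$ for the same reason, and likewise $\mathcal{E}_{bulk}$, $\mathcal{E}_{kin}$; hence $\mathcal{E}_{red}(0)=0$. Substituting into \eqref{eq:relgron3} and discarding the non-negative dissipative contribution $b\mathcal{D}\geq 0$ (recall $b>0$), I obtain, for almost all $t\in(0,T^\dagger)$,
\[
	\mathcal{E}_{red}(t)\leq \int_0^t g(\dta)\,\mathcal{E}_{red}(\dta)\,\ddta,\qquad g\in L^1(0,T^\dagger),\ g\geq 0.
\]
Applying Gronwall's inequality (Lemma~\ref{lem:gronwall} with vanishing right-hand datum $g\equiv 0$, using $\mathcal{E}_{red}\geq 0$) yields $\mathcal{E}_{red}(t)\leq 0$, hence $\mathcal{E}_{red}(t)=0$ for almost all $t\in(0,T^\dagger)$. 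By the coercivity recalled above this gives $\phi=\psi$, $q=Q$, $\u=\U$ a.e.\ in $\Omega\times(0,T^\dagger)$, which is the claim; as a byproduct $\mathcal{D}=0$ as well.

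The substance of the argument — in particular all the genuinely delicate three-dimensional estimates — is entirely absorbed into Theorem~\ref{theo:relative_energy3d}, which we are entitled to assume, so no real obstacle remains. The only mild care-point is the use of Gronwall: a priori $t\mapsto\mathcal{E}_{red}(t)$ is defined only for almost every $t$ and need not be continuous, so one should invoke the measure-theoretic version of the lemma (if $0\leq h\in L^\infty(0,T^\dagger)$ satisfies $h(t)\leq\int_0^t g\,h\,\ddta$ a.e.\ with $0\leq g\in L^1$, then $h\equiv 0$) rather than the continuous statement verbatim; alternatively one may first pass to the lower semicontinuous representative of $\mathcal{E}_{red}$ furnished by the weak lower semicontinuity of the norms. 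Either way the conclusion is unchanged.
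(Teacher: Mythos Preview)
Your proposal is correct and follows essentially the same route as the paper: invoke the relative energy inequality of Theorem~\ref{theo:relative_energy3d}, observe that $\mathcal{E}_{red}(0)=0$ for coinciding initial data, apply Gronwall to conclude $\mathcal{E}_{red}(t)=0$, and deduce equality of the solutions via \eqref{eq:relativeunique}. Your remark about the a.e.-in-time versus continuous formulation of Gronwall is a valid technical caveat that the paper does not address explicitly.
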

\end{tcolorbox}
\begin{Remark}
	Similarly, using Corollary \ref{coro:cond} we obtain a conditional weak-strong uniqueness result in three space dimensions.
\end{Remark}

\begin{proof}(Theorem~\ref{theo:wsu})\\
	\rm{Using Theorems \ref{theo:relative_energy} or \ref{theo:relative_energy3d} and \ref{lem:gronwall}, and realizing that $\mathcal{E}(0)$ is non-negative and constant in time, we find that $\mathcal{E}(t) \leq \mathcal{E}(0)\exp\(\int_0^t g(\dta) \mathrm{d}\dta\)$
	holds for almost all $t\in(0,T^\dagger)$.
	Moreover, by construction $\mathcal{E}(0)=0$ if and only if $\phi_0=\psi_0, q_0=Q_0, \u_0=\U_0,$ for $d=2,3$ and $\CC_0=\HH_0$ for $d=2$, a.e. in $\Omega$, see (\ref{eq:relativeunique}).}
\end{proof}

\section{Relative Energy Proof}
The aim of this section is to prove Theorem \ref{theo:relative_energy}. First, we will decompose the relative energy into the energy of the weak and the strong solution and the correction terms. Afterwards, the correction terms are expanded by testing the weak solution with suitable functions arising from the more regular solution and vice versa. In the second step we recollect corresponding integrals to derive a structure suitable for the Gronwall lemma, cf. \eqref{eq:relgron}, and estimate the remaining terms.We note in passing that we will estimate all quantities related to $(\phi,\mu,q,u)$ and $(\psi,\pi,Q,\U)$ only using estimates that are also valid in three space dimensions. The contribution of the conformation tensors $\CC$ and $\HH$ are treated by means of special two-dimensional inequalities.
\subsection{Decomposition of the relative energy}
The aim of this section is to decompose the relative energy. For better readability we introduce
\begin{align*}
	z=(\phi,q,\u,\CC), \qquad \bar{z}=(\psi,Q,\U,\HH),\qquad
	z\vert\bar{z}=(\phi,q,\u,\CC)\vert(\psi,Q,\U,\HH).
\end{align*}
We start by decomposing the relative energy into two independent energies and a correction term.
\begin{align}
	\mathcal{E}(z\vert\bar{z})(t) = E(z)(t) + E(\bar{z})(t) - \mathcal{DE}(z\vert\bar{z})(t)&\leq -\tilde{\mathcal{D}} + \mathcal{R} + E(z)(0) + E(\bar{z})(0) - \mathcal{DE}(z\vert\bar{z})(t) \nonumber\\
	&\leq -\tilde{\mathcal{D}} + \mathcal{R} + \mathcal{E}(z\vert\bar{z})(0) + \mathcal{DE}(z\vert\bar{z})(0) - \mathcal{DE}(z\vert\bar{z})(t). \label{eq:decomprelen}
\end{align}
Here we have used the energy inequality (\ref{eq:energyineq}) for the weak and a more regular solution.
Further we denote by $\tilde{\mathcal{D}}$ the dissipative term from the energy inequality, by $\mathcal{R}$ the non-dissipative remainder terms and by $\mathcal{DE}$ the correction terms given by
\begin{align}
	\tilde{\mathcal{D}} =& \iQt \Big|n(\phi)\na\mu - \na\big(A(\phi)q\big)\Big|^2 + \frac{q^2}{\tau_b(\phi)} + \varepsilon_1\snorm*{\na q}^2 + \eta(\phi)\snorm*{\Du}^2 + \frac{1}{2}h(\phi)\trC^2\snorm*{\CC}^2 + \frac{\varepsilon_2}{2}\snorm*{\na\CC}^2 \dx\ddta \nonumber\\
	&+\iQt \Big|n(\psi)\na\pi-\na\big(A(\psi)Q\big)\Big|^2  + \frac{Q^2}{\tau_b(\psi)}+ \varepsilon_1\snorm*{\na Q}^2  + \eta(\psi)\snorm*{\mathrm{D}\U}^2  + \frac{1}{2}h(\psi)\trH^2\snorm*{\HH}^2  + \frac{\varepsilon_2}{2}\snorm*{\na\HH}^2 \dx\ddta, \nonumber\\
	\mathcal{R} =& \frac{1}{2}\iQt h(\phi)\trC^2\dx\ddta + \frac{1}{2}\iQt h(\psi)\trH^2\dx\ddta ,\label{eq:Drel}\\
	\mathcal{DE}(z\vert\bar{z})(t) =& \int_\Omega c_0\na\phi\cdot\na\psi + 2F(\psi) + F^\prime(\psi)(\phi-\psi) -a(\phi-\psi)^2 + qQ + \u\cdot\U + \frac{1}{2}\CC:\HH \dx.\nonumber
\end{align}

Applying Lemma \ref{lem:timeid} to the difference $\mathcal{DE}(0)-\mathcal{DE}(t)$ in (\ref{eq:decomprelen}) we find
\begin{align}
	\mathcal{DE}(0)-\mathcal{DE}(t) =& -\int_0^t \(\frac{\partial}{\partial t}\mathcal{DE}\)(\dta) \mathrm{d}\dta= c_0\iQt \frac{\partial \phi(\dta)}{\partial t}\Delta \psi(\dta) + \frac{\partial \psi(\dta)}{\partial t}\Delta\phi(\dta) \dx\,\mathrm{d}\dta  \label{eq:mixphi}\\
	&- \iQt F^{\prime\prime}(\psi(\dta))\Big(\phi(\dta)-\psi(\dta)\Big)\frac{\partial \psi(\dta)}{\partial t} + F^\prime(\psi(\dta))\(\frac{\partial \phi(\dta)}{\partial t} + \frac{\partial \psi(\dta)}{\partial t}\) \dx\,\mathrm{d}\dta \label{eq:mixpot}\\
	& +2a\iQt  \frac{\partial \phi(\dta)}{\partial t}\Big(\phi(\dta)-\psi(\dta)\Big) + \frac{\partial \psi(\dta)}{\partial t}\Big(\psi(\dta)-\phi(\dta)\Big) \dx\,\mathrm{d}\dta \label{eq:phistab} \\
	&- \iQt \frac{\partial q(\dta)}{\partial t} Q(\dta) + q(\dta)\frac{\partial Q(\dta)}{\partial t} \dx\,\mathrm{d}\dta \label{eq:mixq} \\
	&- \iQt \frac{\partial \u(\dta)}{\partial t}\cdot\U(\dta) + \u(\dta)\cdot\frac{\partial \U(\dta)}{\partial t} \dx\,\mathrm{d}\dta \label{eq:mixu}\\
	&- \frac{1}{2}\iQt \frac{\partial \CC(\dta)}{\partial t}:\HH(\dta) + \CC(\dta):\frac{\partial \HH(\dta)}{\partial t} \dx\,\mathrm{d}\dta. \label{eq:mixC}
\end{align}
In the following subsections we will expand all the above terms suitably. From the obtained terms and the corresponding terms in $\tilde{\mathcal{D}}$, cf. (\ref{eq:Drel}), we will construct $\mathcal{D}$, cf. (\ref{eq:Dfull}).


\subsection{Navier-Stokes equation}
We start with term (\ref{eq:mixu}) and insert $\u,\U$ as test functions in the weak formulation (\ref{eq:weak_reg}) for $\U,\u$, respectively,
\begin{align*}
	&- \iQt \frac{\partial \u(\dta)}{\partial t}\cdot\U(\dta) + \u(\dta)\cdot\frac{\partial \U(\dta)}{\partial t} \dx\,\mathrm{d}\dta = \sum_{k=1}^4 I_{\u,k} \equiv\\
	\equiv& \iQt \big[\eta(\phi)+\eta(\psi)\big]\Du:\mathrm{D}\U\dx\ddta + \iQt\trC\CC:\na\U + \trH\HH:\na\u \dx\ddta\\
	&- \iQt  \mu\na\phi\cdot\U + \pi\na\psi\cdot\u \dx\ddta + \int_0^t\b(\u,\u,\U) + \b(\U,\U,\u) \ddta.
\end{align*}

Considering the first integral $I_{\u,1}$ and the corresponding terms of the dissipation (\ref{eq:Drel}), i.e. the fourth term $(\ref{eq:Drel})_{1,4}$ of $\tilde D$ and tenth term $(\ref{eq:Drel})_{1,10}$ of $\tilde D$, we find
\begin{align}
	P_1:=&- \iQt \eta(\phi)\snorm*{\Du}^2 + \eta(\psi)\snorm*{\mathrm{D}\U}^2\dx\ddta + \iQt \big[\eta(\phi)+\eta(\psi)\big]\Du:\mathrm{D}\U \dx\ddta\nonumber\\
	=& -\iQt \eta(\phi)|\Du - \mathrm{D}\U|^2\dx\ddta - \iQt \big[\eta(\psi) - \eta(\phi)\big]\mathrm{D}\U:(\mathrm{D}\U - \Du) \dx\ddta\nonumber\\
	\leq& -\iQt \eta(\phi)|\Du - \mathrm{D}\U|^2\dx\ddta + \frac{1}{\sqrt{\eta_1}}\norm*{\eta^\prime}_\infty\int_0^t\norm*{\phi-\psi}_6\norm*{\mathrm{D}\U}_3\norm*{\sqrt{\eta(\phi)}(\Du - \mathrm{D}\U)}_2 \ddta\nonumber\\
	\leq& -(1-\delta)\iQt \eta(\phi)|\Du - \mathrm{D}\U|^2 \dx\ddta + c(\eta_1,\eta^\prime,\delta)\int_0^t\mathcal{E}(\dta)\norm*{\mathrm{D}\U}_3^2 \ddta. \label{eq:P1}
\end{align}
The convective terms, i.e. $I_{\u,4}$,  by integrating by parts and adding $-\b(\u-\U,\U,\U)=~0$, implies
\begin{align}
	P_2:=&\int_0^t \b(\u,\u,\U) + \b(\U,\U,\u) \ddta =  \int_0^t \b(\u,\u,\U) - \b(\U,\u,\U) \ddta\nonumber\\
	= & \int_0^t \b(\u-\U,\u,\U)- \b(\u-\U,\U,\U) \ddta \leq \,c\int_0^t \norm*{\u-\U}_3 \norm*{\na\u-\na\U}_2 \norm*{\U}_6 \ddta\label{eq:P2}\\
	\leq &\,2\delta\iQt \eta(\phi)|\Du-\mathrm{D}\U|^2 \dx\ddta + c(\delta,\eta_1)\int_0^t\norm*{\u-\U}_2^2\norm*{\U}_6^4 \ddta	\leq \,2\delta\iQt \eta(\phi)|\Du-\mathrm{D}\U|^2 \dx\ddta + c(\delta,\eta_1)\int_0^t\norm*{\U}_6^4 \mathcal{E}(\dta) \ddta.\nonumber
\end{align}
The remaining terms are the coupling terms to the other variables $(\phi,q,\CC)$ and will be treated in what follows.

\subsection{Equation for the conformation tensor}
We recall that all estimates involving the conformation tensor, i.e. $\CC$ and $\HH$, presented in this subsection are done in two space dimensions. We proceed with the time derivatives of the conformation tensor $\CC, \HH$, cf. (\ref{eq:mixC})
\begin{align*}
	&- \frac{1}{2}\iQt \frac{\partial \CC(\dta)}{\partial t}:\HH(\dta) + \CC(\dta):\frac{\partial \HH(\dta)}{\partial t} \dx\ddta = \sum_{k=1}^5 I_{\CC,k} \equiv\\
	\equiv& \varepsilon_2\iQt \na\CC:\na\HH \dx\ddta + \frac{1}{2}\iQt h(\phi)\trC^2\CC:\HH + h(\psi)\trH^2\HH:\CC \dx\ddta\\
	&- \frac{1}{2}\iQt h(\phi)\trC\trH + h(\psi)\trH\trC\dx\ddta + \int_0^t\frac{1}{2}\BB(\u,\CC,\HH) + \frac{1}{2}\BB(\U,\HH,\CC) \ddta\\
	&  - \frac{1}{2}\iQt (\na\u)\CC:\HH + \CC(\na\u)^T:\HH + (\na\U)\HH:\CC + \HH(\na\U)^T:\CC \dx\ddta.
\end{align*}

First we treat the term $I_{\CC,3}$ together with the remainder terms $\mathcal{R}$ of $(\ref{eq:Drel})_2$, i.e the second term $(\ref{eq:Drel})_{2,2}$ and first term $(\ref{eq:Drel})_{2,1}$ of $\mathcal{R}$. Adding $\pm h(\phi)\trH^2$ implies
\begin{align}
	P_3:=&\frac{1}{2}\iQt h(\phi)\trC^2 + h(\psi)\trH^2  - h(\phi)\trC\trH - h(\psi)\trH\trC \dx\ddta\nonumber\\
	=& \frac{1}{2}\iQt h(\phi)\Big[\trC-\trH\Big]^2 + \Big[h(\psi)-h(\phi)\Big]\trH^2 - \Big[h(\psi)-h(\phi)\Big]\trH\trC \dx\ddta\label{eq:P3}\\
	=& \frac{1}{2}\iQt h(\phi)\Big[\trC-\trH\Big]^2 - \Big[h(\psi)-h(\phi)\Big]\trH\Big[\trC - \trH\Big] \dx\ddta
	\leq c\int_0^t  (1+h_2+\norm*{h^\prime}_\infty^2\norm*{\trH}_4^2) \mathcal{E}(\dta) \ddta.\nonumber
\end{align}

Next we treat the terms emitting from diffusion, i.e. $I_{\CC,1}$, together with corresponding dissipation term in \eqref{eq:Drel}, i.e. the sixth and twelfth terms $(\ref{eq:Drel})_{1,6},(\ref{eq:Drel})_{1,12}$
\begin{align}
	-P_4:=\frac{ \varepsilon_2}{2}\iQt \snorm*{\na\CC}^2 + \snorm*{\na\HH}^2 -2\na\CC:\na\HH \dx\ddta  = -\frac{ \varepsilon_2}{2}\iQt \snorm*{\na\CC -\na\HH}^2  \dx\ddta.\label{eq:P4}
\end{align}

Further we consider the relaxation term, i.e. $I_{\CC,2}$, together with the dissipation terms of \eqref{eq:Drel}, i.e the fifth and eleventh terms $(\ref{eq:Drel})_{1,5}, (\ref{eq:Drel})_{1,11}$. Adding first $\pm h(\phi)\trC^2\snorm*{\HH}^2$ and $\pm h(\psi)\trC^2\HH:(\CC-\HH)$ yields
\begin{align}
	P_5:=&-\frac{1}{2}\iQt h(\phi)\trC^2\snorm*{\CC}^2 + h(\psi)\trH^2\snorm*{\HH}^2  - h(\phi)\trC^2\CC:\HH - h(\psi)\trH^2\HH:\CC \dx\ddta\nonumber\\
	=& - \frac{1}{2} \iQt h(\phi)\trC^2(\CC-\HH)^2 + h(\phi)\trC^2(\CC -\HH):\HH + h(\psi)\trH^2(\HH-\CC):\HH \dx\ddta\nonumber\\
	=& - \frac{1}{2} \iQt h(\phi)\trC^2(\CC-\HH)^2 + \Big[h(\phi) - h(\psi)\Big]\trC^2\HH:\Big[\CC-\HH\Big] \dx\ddta\nonumber\\
	&- \frac{1}{2} \iQt h(\psi)\Big[\trC^2 - \trH^2\Big]\HH:\Big[\CC-\HH\Big] \dx\ddta. \label{eq:P5}
\end{align}
To estimate the above terms we rewrite $\trC^2-\trH^2$ as $[\trC-\trH][\trC+\trH]$ and obtain
\begin{align}
	P_5\leq&  -(1-\delta)\frac{1}{2} \iQt h(\phi)\trC^2(\CC-\HH)^2 \dx\ddta + c(h,\delta)\int_0^t \norm*{h^\prime}_\infty^2\norm*{\phi-\psi}_4^2\norm*{\HH}_\infty^2\norm*{\trC}_4^2 \ddta \label{eq:P5est}\\
	&+ c(h,\delta)\int_0^t \norm*{\CC-\HH}_2^2\norm*{\HH}_4^2\norm*{\trC+\trH}_4^2\ddta + \delta\norm*{\na\CC-\na\HH}_{L^2(L^2)}^2\nonumber\\
	\leq&   \iQt -\frac{(1-\delta)}{2}h(\phi)\trC^2(\CC-\HH)^2\dx\ddta + \delta\norm*{\na\CC-\na\HH}_{L^2(L^2)}^2 + c\int_0^t \mathcal{E}(\dta)\(\norm*{\HH}_\infty^2\norm*{\trC}_4^2 + \norm*{\trC+\trH}_4^2 \norm*{\HH}_4^2\) \ddta.  \nonumber
\end{align}

For the convective term of the conformation tensor, i.e. $T_{\CC,4}$, we add $\BB(\u,\CC,\CC)=0$, $\BB(\U,\HH,\HH)=0$ and $\pm\BB(\u,\HH,\CC-\HH)$ to derive
\begin{align}
	P_6:=&\frac{1}{2}\int_0^t\BB(\u,\CC,\HH) + \BB(\U,\HH,\CC) \ddta= -\frac{1}{2}\int_0^t\BB(\u,\CC,\CC-\HH) - \BB(\U,\HH,\CC-\HH) \ddta\label{eq:P6}\\
	=& -\frac{1}{2}\int_0^t\BB(\u,\CC-\HH,\CC-\HH) +  \BB(\u-\U,\HH,\CC-\HH) \ddta= \frac{1}{2}\int_0^t\BB(\u-\U,\CC-\HH,\HH) \ddta \nonumber\\
	\leq& \delta\iQt \snorm*{\na\CC-\na\HH}^2\dx\ddta + \delta\iQt \eta(\phi)\snorm*{\Du-\mathrm{D}\U}^2\dx\ddta + c(\eta)\int_0^t \norm*{\u-\U}_2^2\norm*{\HH}_4^4 \ddta. \nonumber
\end{align}

The last term is $I_{\CC,5}$ which will be treated together with $I_{\u,2}$.
Here we apply Lemma~\ref{lemm:hana} and add the terms $\pm\na\U(\CC-\HH):\HH$,  $\pm(\CC-\HH)(\na\U)^T:\HH$ to obtain
\begin{align}
	P_7:=&-\frac{1}{2}\iQt (\na\u)\CC:\HH + \CC(\na\u)^T:\HH + (\na\U)\HH:\CC + \HH(\na\U)^T:\CC\dx\ddta +\iQt \trC\CC:\na\U + \trH\HH:\na\u  \dx\ddta\nonumber\\
	=& -\frac{1}{2}\iQt (\na\u)\CC:\HH + \CC(\na\u)^T:\HH + (\na\U)\HH:\CC + \HH(\na\U)^T:\CC \dx\ddta\nonumber \\
	&+\frac{1}{2}\iQt   (\na\u)\HH:\HH + \HH(\na\u)^T:\HH + (\na\U)\CC:\CC + \CC(\na\U)^T:\CC \dx\ddta\nonumber\\
	=&-\frac{1}{2}\iQt  \big[(\na\u)(\CC-\HH) \big]:\HH  + \big[(\CC-\HH)(\na\u)^T \big]:\HH \dx\ddta\nonumber \\
	&+\iQt\big[(\na\U)(\CC-\HH) \big]:\CC + \big[(\CC-\HH)(\na\U)^T \big]:\CC \dx\ddta\nonumber \\
	=&-\frac{1}{2}\iQt \big[(\na\u-\na\U)(\CC-\HH) \big]:\HH +  \big[(\CC-\HH)(\na\u-\na\U)^T \big]:\HH \dx\ddta\nonumber\\
	&+\frac{1}{2}\iQt \big[(\na\U)(\CC-\HH) \big]:(\CC-\HH) +  \big[(\CC-\HH)(\na\U)^T \big]:(\CC-\HH) \dx\ddta\label{eq:P7}\\
	\leq& \delta\norm*{\sqrt{\eta(\phi)}(\Du-\mathrm{D}\U)}^2_{L^2(L^2)} + 2\delta\norm*{\na\CC-\na\HH}_{L^2(L^2)}^2 + c(\eta,\delta)\int_0^t  \mathcal{E}(\dta)(\norm*{\na\U}_2^2 + \norm*{\HH}_4^4) \ddta.\nonumber
\end{align}

Summing all estimates for $P_i, i=1,\ldots,7$, i.e. (\ref{eq:P1}), (\ref{eq:P2}), (\ref{eq:P3}), (\ref{eq:P4}), (\ref{eq:P5est}), (\ref{eq:P6}) and (\ref{eq:P7}),  we find
\begin{align}
	\sum_{i=1}^7P_i \leq& (1-4\delta)\iQt \eta(\phi)\snorm*{\Du-\mathrm{D}\U}^2 + \frac{\varepsilon_2}{2}\snorm*{\na\CC-\na\HH}^2 + \frac{1}{2}h(\phi)\trC^2(\CC-\HH)^2 \dx\ddta \label{eq:UPest} \\
	&+ c\int_\Omega \Big(1 + \norm*{\mathrm{D}\U}_3^2 + \norm*{\U}_6^4 + \norm*{\trH}_4^2  + \norm*{\trC+\trH}_4^2 \norm*{\HH}_4^2 + \norm*{\HH}_4^4 + \norm*{\HH}_\infty^2\norm*{\trC}_4^2 \Big) \mathcal{E}(\dta) \ddta. \nonumber
\end{align}
The only remaining terms are the coupling terms $I_{\u,3}$ which we treat later.


\subsection{Cahn-Hilliard equation}
In this subsection we consider the integrals arising from (\ref{eq:mixpot}) and find again by a reformulation and applying suitable test functions the following equality
\begin{align}
	&- \iQt F^{\prime\prime}(\psi(\dta))\Big(\phi(\dta)-\psi(\dta)\Big)\frac{\partial \psi(\dta)}{\partial t} + F^\prime(\psi(\dta))\(\frac{\partial \phi(\dta)}{\partial t} + \frac{\partial \psi(\dta)}{\partial t}\) \dx\ddta \equiv \sum_{k=1}^5 I_{F(\phi),k}\nonumber\\
	\equiv& \iQt m(\psi)\na\pi\na\Big(F^\prime(\psi) + F^{\prime\prime}(\psi)(\phi-\psi) \Big)\dx\ddta + \iQt m(\phi)\na\mu\na(F^\prime(\psi)) \dx\ddta\nonumber\\
	&-\iQt n(\psi)\na\Big(A(\psi)Q\Big)\na\Big(F^\prime(\psi) + F^{\prime\prime}(\psi)(\phi-\psi) \Big)\dx\ddta - \iQt n(\phi)\na\Big(A(\phi)q\Big)\na(F^\prime(\psi)) \dx\ddta\nonumber\\
	& +\iQt \U\cdot\na\psi\Big(F^\prime(\psi) + F^{\prime\prime}(\psi)(\phi-\psi)\Big) + \(\u\cdot\na\phi\) F^\prime(\psi) \dx\ddta. \label{eq:potexpand}
\end{align}

Furthermore, we expand (\ref{eq:mixphi}) that yields
\begin{align}
	&c_0\iQt \frac{\partial \phi(\dta)}{\partial t}\Delta \psi(\dta) + \frac{\partial \psi(\dta)}{\partial t}\Delta\phi(\dta) \dx\ddta \equiv\sum_{k=1}^3 I_{\phi,k} \label{eq:phiexpand}\\
	\equiv& -c_0\iQt m(\phi)\na\mu\na\Delta\psi + m(\psi)\na\pi\na\Delta\phi\dx\ddta + c_0\iQt n(\phi)\na\Big(A(\phi)q\Big)\na\Delta\psi  + n(\psi)\na\Big(A(\psi)Q\Big)\na\Delta\phi \dx\ddta\nonumber\\
	& -  c_0\iQt\(\u\cdot\na\phi\)\Delta\psi - \(\U\cdot\na\psi\)\Delta\phi \dx\ddta. \nonumber
\end{align}

Combining (\ref{eq:potexpand}) and (\ref{eq:phiexpand}) and by adding $$\pm m(\psi)\na\pi\na(F^\prime(\phi)), \quad\pm (\U\cdot\na\psi)F^\prime(\phi), \quad \pm n(\psi)\na\Big(A(\psi)Q\Big)\na(F^\prime(\phi))$$ we derive
\begin{align}
 \sum_{k=1}^5 I_{F(\phi),k} + \sum_{k=1}^3 I_{\phi,k}=&\iQt  m(\phi)\na\mu\na\pi + m(\psi)\na\mu\na\pi- m(\psi)\na\pi\na\Big(F^\prime(\phi) - F^\prime(\psi) - F^{\prime\prime}(\psi)(\phi-\psi)\Big) \dx\ddta\nonumber\\
	&-\iQt \U\cdot\na\psi\Big(F^\prime(\phi) - F^\prime(\psi) - F^{\prime\prime}(\psi)(\phi-\psi)\Big)\dx\ddta + \iQt \(\u\cdot\na\phi\)\pi + \(\U\cdot\na\psi\)\mu \dx\ddta\nonumber\\
	&-\iQt n(\phi)\na\Big(A(\phi)q\Big)\na\pi + n(\psi)\na\Big(A(\psi)Q\Big)\na\mu  \dx\ddta\nonumber\\
	&+\iQt n(\psi)\na\Big(A(\psi)Q\Big)\na\Big(F^\prime(\phi) - F^\prime(\psi) - F^{\prime\prime}(\psi)(\phi-\psi) \Big) \dx\ddta. \label{eq:pqmwithouta}
\end{align}

Finally, by expanding the penalty term (\ref{eq:phistab}) we find
\begin{align}
	&2a\iQt  \frac{\partial \phi(\dta)}{\partial t}(\phi(\dta)-\psi(\dta)) + \frac{\partial \psi(\dta)}{\partial t}(\psi(\dta)-\phi(\dta)) \dx\ddta \equiv \sum_{k=1}^3 I_{a,k} \label{eq:penatlyexpand}\\
	\equiv& -2a\iQt m(\phi)\na\mu(\na\phi - \na\psi) + m(\psi)\na\pi(\na\psi - \na\phi) \dx\ddta \nonumber\\
	& +2a\iQt n(\phi)\na\Big(A(\phi)q\Big)(\na\phi - \na\psi) + n(\psi)\na\Big(A(\psi)Q\Big)(\na\psi - \na\phi)  \dx\ddta +2a\iQt \(\u\cdot\na\phi\)\psi + \(\U\cdot\na\psi\)\phi\dx\ddta. \nonumber
\end{align}

\subsection{Equation for the bulk stress}
Due to the cross-diffusive coupling between the Cahn-Hilliard equation $(\ref{eq:weak_reg})_1$ and the bulk stress equation $(\ref{eq:weak_reg})_2$ it is necessary to consider the correction terms arising from $(\ref{eq:Drel})_3$ together.
Therefore we expand (\ref{eq:mixq}) into
\begin{align*}
	&- \iQt \frac{\partial q(\dta)}{\partial t} Q(\dta) + q(\dta)\frac{\partial Q(\dta)}{\partial t} \dx\ddta \\
	=& \iQt \frac{1}{\tau_b(\phi)}qQ + \frac{1}{\tau_b(\psi)}qQ + 2\varepsilon_1\na q\na Q + \na\Big(A(\phi)q\Big)\na\Big(A(\phi)Q\Big) + \na\Big(A(\psi)Q\Big)\na\Big(A(\psi)q\Big) \dx\ddta \\
	&-\iQt n(\phi)\na\mu\na\Big(A(\phi)Q\Big) + n(\psi)\na\pi\na\Big(A(\psi)q\Big) - \(\u\cdot\na q\)Q - \(\U\cdot\na Q\)q \dx\ddta\\
	=& \iQt \frac{2}{\tau_b(\phi)}qQ + \(\frac{1}{\tau_b(\psi)}-\frac{1}{\tau_b(\phi)}\)qQ  + \na\Big(A(\phi)q\Big)\na\Big(A(\phi)Q\Big) + \na\Big(A(\psi)Q\Big)\na\Big(A(\psi)q\Big) \dx\ddta \\
	&-\iQt n(\phi)\na\mu\na\Big(A(\phi)Q\Big) + n(\psi)\na\pi\na\Big(A(\psi)q\Big) - \((\u-\U)\cdot\na q\)Q - 2\varepsilon_1\na q\na Q \dx\ddta \equiv \sum_{k=1}^5 I_{q,k}.
\end{align*}

Considering the relaxation terms $I_{q,1}$ and the corresponding terms in $\tilde{\mathcal{D}}$, i.e. the second and eighths terms $(\ref{eq:Drel})_2, (\ref{eq:Drel})_8$ we find
\begin{align}
	P_8:=&-\iQt \frac{1}{\tau_b(\phi)}q^2 + \frac{1}{\tau_b(\psi)}Q^2 - \frac{2}{\tau_b(\phi)}qQ - \(\frac{1}{\tau_b(\psi)}-\frac{1}{\tau_b(\phi)}\)qQ \dx\ddta \nonumber\\
	=&-\iQt \frac{1}{\tau_b(\phi)}(q-Q)^2\dx\ddta - \iQt \(\frac{1}{\tau_b(\psi)}-\frac{1}{\tau_b(\phi)}\)(Q^2 - qQ) \dx\ddta\nonumber\\
	\leq& -\iQt \frac{1}{\tau_b(\phi)}(q-Q)^2\dx\ddta + \int_0^t \norm*{ (1/\tau_b)^\prime}_\infty\norm*{\phi-\psi}_6\norm*{Q}_3\norm*{q-Q}_2 \ddta\nonumber\\
	\leq& -\iQt \frac{1}{\tau_b(\phi)}(q-Q)^2\dx\ddta + c(\tau_b^\prime)\int_0^t (\norm*{Q}_3^2 + 1)\mathcal{E}(\dta) \ddta. \label{eq:relaxest}
\end{align}

Considering the linear diffusion term $I_{q,5}$ and the related terms in $\tilde{\mathcal{D}}$, i.e. the third and ninth terms $(\ref{eq:Drel})_3, (\ref{eq:Drel})_9$ we obtain
\begin{align}
	P_9:=&-\varepsilon_1\iQt \snorm*{\na q}^2 + \snorm*{\na Q}^2 -2\na q \na Q \dx\ddta= - \varepsilon_1\iQt \snorm*{\na q - \na Q}^2 \dx\ddta. \label{eq:lineardiffusionq}
\end{align}
All the remaining terms will be absorbed either into a nonlinear diffusive remainder $\mathcal{R}_{mix}$ which is associated to the cross-diffusive coupling in (\ref{eq:energyineq}) or to a convective remainder $\mathcal{R}_{conv}$ which collects the remaining convective terms. The  remainders are given by the following equations.

\begin{align}
	\mathcal{R}_{mix} =& -\iQt \Big[n(\phi)\na\mu - \na\big(A(\phi)q\big)\Big]^2 + \Big[n(\psi)\na\pi - \na\big(A(\psi)Q\big)\Big]^2 \dx\ddta +\iQt m(\phi)\na\mu\na\pi + m(\psi)\na\mu\na\pi \dx\ddta\label{eq:rmix1}\\
	&-\iQt n(\phi)\na\mu\na\Big(A(\phi)Q\Big) + n(\psi)\na\pi\na\Big(A(\psi)q\Big) \dx\ddta-\iQt n(\phi)\na\pi\na\Big(A(\phi)q\Big) + n(\psi)\na\mu\na\Big(A(\psi)Q\Big) \dx\ddta\nonumber \\
	&+\iQt \na\Big(A(\phi)q\Big)\na\Big(A(\phi)Q\Big) + \na\Big(A(\psi)Q\Big)\na\Big(A(\psi)q\Big) \dx\ddta-2a\iQt m(\phi)\na\mu(\na\phi - \na\psi) + m(\psi)\na\pi(\na\psi - \na\phi)\dx\ddta\nonumber\\
	&+2a\iQt n(\phi)\na\Big(A(\phi)q\Big)(\na\phi - \na\psi) + n(\psi)\na\Big(A(\psi)Q\Big)(\na\psi - \na\phi)\dx\ddta, \nonumber\\
	\mathcal{R}_{conv} =&  \iQt \(\u\cdot\na\phi\)\pi + \(\U\cdot\na\psi\)\mu \dx\ddta- \iQt\mu\na\phi\cdot\U + \pi\na\psi\cdot\u\dx\ddta + \iQt \((\u-\U)\cdot\na q\)Q\dx\ddta   \nonumber\\
	&+ 2a\iQt \((\u-\U)\cdot\na\phi\)\psi\dx\ddta-\iQt \(\U\cdot\na\psi\)\Big(F^\prime(\phi) - F^\prime(\psi) - F^{\prime\prime}(\psi)(\phi-\psi)\Big)\dx\ddta \nonumber\\
	&- \iQt \Big[m(\psi)\na\pi -  n(\psi)\na\Big(A(\psi)Q\Big)\Big]\na\Big(F^\prime(\phi) - F^\prime(\psi) - F^{\prime\prime}(\psi)(\phi-\psi)\Big) \dx\ddta. \label{eq:rconv1}
\end{align}

\subsection{Convective remainder term}
In this subsection our aim is to  estimate the term $\mathcal{R}_{conv}$, cf. (\ref{eq:rconv1}).
We apply integration by parts, use (\ref{eq:bc}), and add $\pm\pi\U(\na\phi-\na\psi)$, $+(\u-\U)\na\psi\psi$, $-(\u-\U)\na QQ$.
Further,  we add the terms $\pm\U(\phi-\psi)\na\Big(A(\phi)(q-Q)\Big)/n(\phi)$  to obtain the following representation of $\mathcal{R}_{conv}$
\begin{align}
	\mathcal{R}_{conv} =&  \iQt \pi\u\cdot(\na\phi-\na\psi) + \mu\U\cdot(\na\psi-\na\phi)\dx\ddta  -\iQt \((\u-\U)\cdot\na Q\)q\dx\ddta + 2a\iQt \((\u-\U)\cdot\na\phi\)\psi\dx\ddta\nonumber\\
	&- \frac{1}{2}\iQt \mathrm{div}\Big[\U\psi - m(\psi)\na\pi +  n(\psi)\na\Big(A(\psi)Q\Big)\Big]\Big( F^{\prime\prime\prime}(z)(\phi-\psi)^2\Big) \dx\ddta\nonumber\\
	=&  \iQt \pi(\u-\U)\cdot(\na\phi-\na\psi) + (\na\mu-\na\pi)\U(\phi-\psi) \dx\ddta - 2a\iQt (\u-\U)\cdot\na\psi(\phi-\psi)\dx\ddta \nonumber\\
	& - \iQt (\u-\U)\cdot\na Q(q-Q) \dx\ddta- \frac{1}{2}\iQt \mathrm{div}\Big[\U\psi - m(\psi)\na\pi +  n(\psi)\na\Big(A(\psi)Q\Big)\Big]\Big( F^{\prime\prime\prime}(z)(\phi-\psi)^2\Big) \dx\ddta\nonumber\\
	=&  \iQt -\na\pi\cdot(\u-\U)(\phi-\psi) + \U\frac{\phi-\psi}{n(\phi)}\Big[n(\phi)(\na\mu-\na\pi) - \na\Big(A(\phi)(q-Q) \Big) \Big] \dx\ddta  \nonumber\\
	&- 2a\iQt (\u-\U)\cdot\na\psi(\phi-\psi)\dx\ddta - \iQt (\u-\U)\cdot\na Q(q-Q) \dx\ddta  +\iQt \U\frac{\phi-\psi}{ n(\phi)}\na\Big(A(\phi)(q-Q)\Big) \dx\ddta \label{eq:rconvfin}\\
	&- \frac{1}{2}\iQt \mathrm{div}\Big[\U\psi - m(\psi)\na\pi +  n(\psi)\na\Big(A(\psi)Q\Big)\Big]\Big( F^{\prime\prime\prime}(z)(\phi-\psi)^2\Big) \dx\ddta .\nonumber
\end{align}
 Here $z$ denotes a suitable convex combination of $\phi$ and $\psi$.
We will now estimate each term of (\ref{eq:rconvfin}) separately and obtain
\begin{align*}
	I_1 &\leq \int_0^t \norm*{\na\pi}_3\norm*{\u-\U}_2\norm*{\phi-\psi}_6 \ddta \leq c\int_0^t \mathcal{E}(\dta)(1+\norm*{\na\pi}_3^2) \ddta, \\
	I_2 &\leq \delta\norm*{n(\phi)(\na\mu-\na\pi) - \na\Big(A(\phi)(q-Q) \Big)}^2_{L^2(L^2)} + c\int_0^t \norm*{\U}_3^2\norm*{\phi-\psi}_6^2 \ddta,\\
	I_3 &\leq \int_0^t \norm*{\na\psi}_3\norm*{\u-\U}_2\norm*{\phi-\psi}_6 \ddta \leq c\int_0^t \mathcal{E}(\dta)(1+\norm*{\na\psi}_3^2) \ddta,\\
	I_4 &\leq \int_0^t \norm*{\na Q}_6\norm*{\u-\U}_2\norm*{q-Q}_3 \ddta \leq \delta\norm*{\na q-\na Q}_{L^2(L^2)}^2 + c\int_0^t \mathcal{E}(\dta)(1+\norm*{\na Q}_6^4) \ddta,\\
	I_5 &\leq c\int_0^t \norm*{\U}_\infty\norm*{\phi-\psi}_4\norm*{q-Q}_4\norm*{\na\phi}_2 + \norm*{\U}_\infty\norm*{\na q-\na Q}_2\norm*{\phi-\psi}_2 \ddta \\
	&\leq c\int_0^t \norm*{\U}_\infty\norm*{\phi-\psi}_6\norm*{q-Q}_{H^1}\norm*{\na\phi}_3 + \norm*{\U}_\infty\norm*{\na q-\na Q}_2\norm*{\phi-\psi}_2 \ddta \\
	&\leq 2\delta\norm*{\na q-\na Q}_{L^2(L^2)}^2 + c\int_0^t \mathcal{E}(\dta)(1+\norm*{\na\phi}_3^2\norm*{\U}_\infty^2 + \norm*{\U}_\infty^2) \ddta,\\
	I_6 &\leq \int_0^t \norm*{\div{\U\psi-m(\psi)\na\pi+n(\psi)\na\Big(A(\psi)Q\Big)}}_2\norm*{F^{\prime\prime\prime}(z)}_3\norm*{\phi-\psi}_6^2 \ddta \\
	&\leq c\int_0^t  \mathcal{E}(\dta)\norm*{\div{\U\psi-m(\psi)\na\pi+n(\psi)\na\Big(A(\psi)Q\Big)}}_2(1 + \norm*{\phi}_3 + \norm*{\psi}_3) \ddta,
\end{align*}
where $z$ is a suitable convex combination of $\phi$ and $\psi.$
Here we have used Assumptions \ref{ass:regular} to control $\norm*{F^{\prime\prime\prime}(z)}_3$ by $c(\norm{\phi}_3 + \norm{\psi}_3).$
Summing all the estimates we find
\begin{align}
	\sum_{i=1}^6 I_i \leq& \delta\norm*{n(\phi)(\na\mu-\na\pi) - \na\Big(A(\phi)(q-Q) \Big)}^2_{L^2(L^2)} + 3\delta\norm*{\na q-\na Q}_{L^2(L^2)}^2 + c\int_0^t \Big( 1+\norm*{\na\pi}_3^2 + \norm*{\U}_3^2 + \norm*{\na\psi}_3^2  \label{eq:conv}\\
	& + \norm*{\na Q}_6^4  + \norm*{\na\phi}_3^2\norm*{\U}_\infty^2 + \norm*{\U}_\infty^2+ \norm*{\div{\U\psi-m(\psi)\na\pi+n(\psi)\na\Big(A(\psi)Q\Big)}}_2(1 + \norm*{\phi}_3 + \norm*{\psi}_3)\Big)  \mathcal{E}(\dta)\ddta. \nonumber
\end{align}

\subsection{Cross-diffusion remainder term}
This section is devoted to the treatment of (\ref{eq:rmix1}). For convenience we split the integral further into $\mathcal{\tilde{R}}_{mix}$ which contains all diffusive terms without the penalty term and a penalty remainder $\mathcal{R}_a$
\begin{equation}
	\mathcal{R}_{mix} = \mathcal{\tilde{R}}_{mix} + \mathcal{R}_a \label{eq:splitar}.
\end{equation}
We will first consider $\mathcal{R}_{mix}$ given by
\begin{align*}
	\mathcal{\tilde{R}}_{mix} =& -\iQt \Big|n(\phi)(\na\mu-\na\pi) - \na\Big(A(\phi)(q-Q)\Big) \Big|^2 \dx\ddta  +\iQt \Big(m(\psi) - m(\phi)\Big)(\na\mu-\na\pi)\na\pi\dx\ddta \\
	& +\iQt n(\phi)\na\mu\na\Big(A(\phi)Q\Big)\dx\ddta - \iQt n(\psi)\na\mu\na\Big(A(\psi)Q\Big)\dx\ddta  +\iQt n(\phi)\na\pi\na\Big(A(\phi)q\Big)\dx\ddta  \\
	& - \iQt n(\psi)\na\pi\na\Big(A(\psi)q\Big)\dx\ddta-2\iQt n(\phi)\na\pi\na\Big(A(\phi)Q\Big)\dx\ddta  + 2\iQt n(\psi)\na\pi\na\Big(A(\psi)Q\Big)\dx\ddta \\
	& -\iQt \snorm*{\na\Big(A(\psi)Q\Big)}^2 + \iQt \snorm*{\na\Big(A(\phi)Q\Big)}^2\dx\ddta  -\iQt \na\Big(A(\phi)q\Big)\na\Big(A(\phi)Q \Big)\dx\ddta + \iQt \na\Big(A(\psi)q\Big)\na\Big(A(\psi)Q\Big) \dx\ddta.
\end{align*}
After simple but tedious calculations we obtain the following result.
\begin{Lemma}
	$\mathcal{\tilde{R}}_{mix}$ can be rewritten as
	\begin{align*}
		\mathcal{\tilde{R}}_{mix} =& -\iQt \Big|n(\phi)(\na\mu-\na\pi) - \na\Big(A(\phi)(q-Q)\Big) \Big|^2 \dx\ddta  + \iQt \Big(n(\psi) - n(\phi)\Big)\na\pi\Big[ n(\phi)(\na\mu-\na\pi) - \na\Big(A(\phi)(q-Q)\Big)\Big] \dx\ddta\\
		& + \iQt \na\Big((A(\phi)-A(\psi))Q\Big)\Big[n(\phi)(\na\mu-\na\pi) - \na\Big(A(\phi)(q-Q)\Big)\Big]\dx\ddta \\
		& + \iQt \Big(\frac{n(\psi) - n(\phi)}{n(\phi)}\Big)\Big[n(\phi)(\na\mu-\na\pi) - \na\Big(A(\phi)(q-Q)\Big)\Big]\Big[ n(\psi)\na\pi - \na\Big(A(\psi)Q\Big) \Big]\dx\ddta \\
		& + \iQt \Big(n(\psi) - n(\phi)\Big)\Big[n(\psi)\na\pi - \na\Big(A(\psi)Q \Big) \Big]\na\Big(A(\phi)(q-Q)\Big)/n(\phi) \dx\ddta\\
		& - \iQt \Big((A(\phi)-A(\psi))(q-Q)\Big)\mathrm{div}\Big[n(\psi)\na\pi - \na\Big(A(\psi)Q\Big) \Big]\dx\ddta.
	\end{align*}
	
\end{Lemma}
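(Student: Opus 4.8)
The plan is to prove this identity by a direct, if somewhat lengthy, rearrangement together with a single integration by parts, matching the claimed right-hand side against the definition of $\mathcal{\tilde R}_{mix}$ term by term. To keep the algebra under control I would first abbreviate the relative flux and the flux of the more regular solution by
$$\mathcal{F}:=n(\phi)(\na\mu-\na\pi)-\na\big(A(\phi)(q-Q)\big),\qquad\mathcal{G}:=n(\psi)\na\pi-\na\big(A(\psi)Q\big),$$
and record the trivial splittings $\na\big(A(\phi)(q-Q)\big)=\na\big(A(\phi)q\big)-\na\big(A(\phi)Q\big)$, $\na\big((A(\phi)-A(\psi))Q\big)=\na\big(A(\phi)Q\big)-\na\big(A(\psi)Q\big)$ and $\na\big((A(\phi)-A(\psi))(q-Q)\big)=\na\big(A(\phi)(q-Q)\big)-\na\big(A(\psi)(q-Q)\big)$. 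Since the term $-\iQt|\mathcal{F}|^2\dx\ddta$ is common to both sides, it suffices to show that the remaining five integrals on the right reproduce the eleven remaining ones in the definition of $\mathcal{\tilde R}_{mix}$.

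The first step is to integrate by parts in the last term on the right: since $\mathrm{div}\,\mathcal{G}\in L^2(\Omega_T)$ by Definition~\ref{defn:moreweak} and the boundary term vanishes by the (natural) boundary conditions \eqref{eq:bc} satisfied by the more regular weak solution (so that $\mathcal{G}\cdot n=0$ on $\p\Omega$, using $\p_n\pi=0$ and $\p_n\big(A(\psi)Q\big)=A'(\psi)\,\p_n\psi\,Q+A(\psi)\,\p_nQ=0$ in the strong case), one has
$$-\iQt\big((A(\phi)-A(\psi))(q-Q)\big)\,\mathrm{div}\,\mathcal{G}\dx\ddta=\iQt\na\big((A(\phi)-A(\psi))(q-Q)\big)\cdot\mathcal{G}\dx\ddta.$$
After this, every term on the right is a scalar product of $\mathcal{F}$, $\mathcal{G}$, $\na\pi$ and gradients of $A(\cdot)q$, $A(\cdot)Q$.

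The second step is the bookkeeping. Substituting the definitions of $\mathcal{F}$ and $\mathcal{G}$, I expect the two terms carrying the prefactor $1/n(\phi)$ to combine into $\big(n(\psi)-n(\phi)\big)(\na\mu-\na\pi)\cdot\mathcal{G}$ — the $1/n(\phi)$ cancels because $\mathcal{F}+\na\big(A(\phi)(q-Q)\big)=n(\phi)(\na\mu-\na\pi)$. Adding this to the expansion of $\big(n(\psi)-n(\phi)\big)\na\pi\cdot\mathcal{F}$ produces, in front of $(\na\mu-\na\pi)\cdot\na\pi$, the coefficient $n(\phi)\big(n(\psi)-n(\phi)\big)+n(\psi)\big(n(\psi)-n(\phi)\big)=n(\psi)^2-n(\phi)^2$, which equals $m(\psi)-m(\phi)$ by the structural assumption $n^2=m$ (Assumptions~\ref{ass:regular}); this is precisely the second term in the definition of $\mathcal{\tilde R}_{mix}$, and it is the only place where $n^2=m$ enters. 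The remaining contributions split into two families: those still carrying a factor $\na\mu$ or $\na\pi$, which after using $n(\phi)+\big(n(\psi)-n(\phi)\big)=n(\psi)$ should reproduce the six mixed products $n(\phi)\na\mu\cdot\na(A(\phi)Q)$, $n(\psi)\na\mu\cdot\na(A(\psi)Q)$, $n(\phi)\na\pi\cdot\na(A(\phi)q)$, $n(\psi)\na\pi\cdot\na(A(\psi)q)$, $n(\phi)\na\pi\cdot\na(A(\phi)Q)$, $n(\psi)\na\pi\cdot\na(A(\psi)Q)$ with the coefficients $+1,-1,+1,-1,-2,+2$ as in the definition; and those built only from the gradients of $A(\cdot)q$ and $A(\cdot)Q$, where all cross terms of the form $\na(A(\psi)Q)\cdot\na(A(\phi)q)$ and $\na(A(\phi)Q)\cdot\na(A(\psi)Q)$ cancel, leaving exactly $|\na(A(\phi)Q)|^2-|\na(A(\psi)Q)|^2-\na(A(\phi)q)\cdot\na(A(\phi)Q)+\na(A(\psi)q)\cdot\na(A(\psi)Q)$, i.e. the last four terms.

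Since the argument is purely computational, the main obstacle is not conceptual but organizational: keeping all signs straight across roughly a dozen pairings. The two places where something genuinely has to work out are the cancellation of the $1/n(\phi)$ prefactor in the pair of $\mathcal{F}\cdot\mathcal{G}$-type terms and the collapse $n(\psi)^2-n(\phi)^2=m(\psi)-m(\phi)$; once these are secured, and the single integration by parts is justified via \eqref{eq:bc} and Definition~\ref{defn:moreweak}, the rest of the matching is mechanical.
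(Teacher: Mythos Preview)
Your proposal is correct and follows essentially the same approach as the paper: a purely algebraic rearrangement together with one integration by parts in the term carrying $\mathrm{div}\,\mathcal{G}$. The paper organizes the computation forward (starting from the definition of $\mathcal{\tilde R}_{mix}$ and repeatedly adding and subtracting terms until the claimed form appears), whereas you organize it backward (expanding the right-hand side and matching), but the two nontrivial ingredients you single out --- the cancellation of $1/n(\phi)$ via $\mathcal{F}+\na\big(A(\phi)(q-Q)\big)=n(\phi)(\na\mu-\na\pi)$ and the collapse $n(\psi)^2-n(\phi)^2=m(\psi)-m(\phi)$ from Assumptions~\ref{ass:regular} --- are exactly the hinges of the paper's derivation as well.
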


The proof can be found in the Appendix. We will now estimate all the integral separately which yields
\begin{align*}
	I_0 &= -\norm*{n(\phi)(\na\mu-\na\pi) - \na\Big(A(\phi)(q-Q)\Big)}_{L^2(L^2)}^2, \\
	I_1 &\leq \delta\norm*{n(\phi)(\na\mu-\na\pi) - \na\Big(A(\phi)(q-Q)\Big)}_{L^2(L^2)}^2 + c\int_0^t \norm*{n^\prime}_\infty^2\norm*{\na\pi}_3^2\norm*{\phi-\psi}_6^2\ddta, \\
	I_2 &\leq \delta\norm*{n(\phi)(\na\mu-\na\pi) - \na\Big(A(\phi)(q-Q)\Big)}_{L^2(L^2)}^2 + c\int_0^t \norm*{\na\Big((A(\phi)-A(\psi))Q\Big)}_2^2 \ddta, \\
	I_3 &\leq \delta\norm*{n(\phi)(\na\mu-\na\pi) - \na\Big(A(\phi)(q-Q)\Big)}_{L^2(L^2)}^2 + c\int_0^t \norm*{n^\prime}_\infty^2\norm*{n(\psi)\na\pi-\na\Big(A(\psi) Q\Big)}_3^2\norm*{\phi-\psi}_6^2\ddta, \\
	I_4 &\leq c\int_0^t \norm*{[n(\psi)\na\pi - \na\Big(A(\psi)Q \Big) }_4\(\norm*{\phi-\psi}_6\norm*{\na q - \na Q}_2 + \norm*{\phi-\psi}_6 \norm*{\na\phi}_4\norm*{q-Q}_3 \)\\
	&\leq 2\delta\norm*{\na q-\na Q}^2_{L^2(L^2)} + c\int_0^t \mathcal{E}(\dta)\norm*{[n(\psi)\na\pi - \na\Big(A(\psi)Q \Big) }_4^2\(1 +  \norm*{\na\phi}_4^2  \) \ddta, \\
	I_5 &\leq c\int_0^t \norm*{\mathrm{div}\Big[n(\psi)\na\pi - \na\Big(A(\psi)Q\Big)\Big]}_2\norm*{\phi-\psi}_6\norm*{q-Q}_3 \ddta \\
	&\leq \delta\norm*{\na q-\na Q}^2_{L^2(L^2)}+ c\int_0^t  \(1+\norm*{\mathrm{div}\Big[n(\psi)\na\pi - \na\Big(A(\psi)Q\Big)\Big]}_2^2\)\mathcal{E}(\dta) \ddta.
\end{align*}

Summing up all above estimates for $\mathcal{\tilde{R}}_{mix}$ and applying Lemma \ref{lem:easy_control} we conclude
\begin{align}
	\sum_{i=0}^5 I_i \leq& -(1-3\delta)\norm*{n(\phi)(\na\mu-\na\pi) - \na\Big(A(\phi)(q-Q)\Big)}_{L^2(L^2)}^2 + 3\delta\norm*{\na q - \na Q}_{L^2(L^2)}^2\nonumber\\
	& +c\int_0^t \Big(\norm*{\na\pi}_3^2 + \norm*{\na Q}_3^2 + \norm*{Q}_\infty^2 + \norm*{\na\phi}_3^2\norm*{Q}_\infty^2 + \norm*{\na\psi}_3^2\norm*{Q}_\infty^2 \Big. \label{eq:Rmixx}\\
	&+ \left.\norm*{n(\psi)\na\pi - \na\Big(A(\psi)Q \Big) }_4^2\(1 + \norm*{\na\phi}_4^2  \) +\norm*{\mathrm{div}\Big[n(\psi)\na\pi - \na\Big(A(\psi)Q\Big)\Big]}_2^2 \)\mathcal{E}(\dta) \ddta. \nonumber
\end{align}

Finally, we consider the remainder $\mathcal{R}_a$, see (\ref{eq:splitar}), by adding the terms $\pm2a m(\phi)\na\pi\na(\phi-\psi)$, $\pm2a n(\phi)\na\Big(A(\phi)Q\Big)\na(\phi-\psi)$, $\pm2a n(\psi)\na\Big(A(\phi)Q\Big)\na(\phi-\psi)$  leads to
\begin{align*}
	\mathcal{R}_a=&-2a\iQt m(\phi)\na\mu(\na\phi - \na\psi) + m(\psi)\na\pi(\na\psi - \na\phi)\dx\ddta\\
	&+2a\iQt n(\phi)\na\Big(A(\phi)q\Big)(\na\phi - \na\psi) + n(\psi)\na\Big(A(\psi)Q\Big)(\na\psi - \na\phi)\dx\ddta \\
	=& -2a\iQt n(\phi)\Big[n(\phi)(\na\mu-\na\pi) - \na\Big(A(\phi)(q-Q)\Big) \Big]\na(\phi-\psi) \dx\ddta\\
	&- 2a\iQt \Big(m(\phi) - m(\psi)\Big)\na\pi\na(\phi-\psi) \dx\ddta+ 2a\iQt \Big(n(\phi) - n(\psi)\Big)\na\Big(A(\phi)Q\Big)\na(\phi-\psi)\dx\ddta \\
	&+ 2a\iQt n(\psi)\na\Big((A(\phi)-A(\psi))Q\Big)\na(\phi-\psi)\dx\ddta.
\end{align*}
Similarly as in (\ref{eq:Rmixx}) we estimate the integrals of $\mathcal{R}_a$
\begin{align*}
	\mathcal{R}_a&\leq \delta\norm*{[n(\phi)(\na\mu-\na\pi) - \na\Big(A(\phi)(q-Q)\Big)}_{L^2(L^2)}^2  +c\int_0^t \norm*{\na\phi-\na\psi}_2^2 + \norm*{\phi-\psi}_6^2\norm*{\na\pi}_3^2 \ddta \\
	& +c\int_0^t \norm*{\na\phi-\na\psi}_2^2 + \norm*{\psi-\psi}_6^2\norm*{\na\phi}_3^2\norm*{Q}_\infty^2 + \norm*{\psi-\psi}_6^2\norm*{\na Q}_3^2 \ddta  +c\int_0^t \norm*{\na\Big((A(\phi)-A(\psi))Q\Big)}_2\norm*{\na\phi-\na\psi}_2 \ddta .
\end{align*}
Application of Lemma \ref{lem:easy_control} implies
\begin{align}
	\mathcal{R}_a\leq& \;\delta\norm*{[n(\phi)(\na\mu-\na\pi) - \na\Big(A(\phi)(q-Q)\Big)}_{L^2(L^2)}^2 \nonumber\\
	&+ c\int_0^t \mathcal{E}(\dta)\left[1 + \norm*{\na\pi}_3^2 + \norm*{\na\phi}_3^2\norm*{Q}_\infty^2 + \norm*{\na Q}_3^2 +  \norm*{Q}_\infty^2 + \norm*{Q}_\infty^2 \norm*{\na\psi}_3^2 \right] \ddta. \label{eq:Rmixa}
\end{align}

\subsection{Gronwall-type estimates}
In this section we will collect all estimates derived in the previous sections to deduce inequality (\ref{eq:relgron}).
Summing the estimates (\ref{eq:UPest}), (\ref{eq:relaxest}), (\ref{eq:lineardiffusionq}), (\ref{eq:conv}), (\ref{eq:Rmixx}), (\ref{eq:Rmixa}) yields
\begin{align}
	\mathcal{E}(t) + b\mathcal{D} &\leq \mathcal{E}(0)  + \int_0^t g(\dta)\mathcal{E}(\dta) \ddta, \label{eq:rel_gron_a}\\
	\int_0^t \mathcal{E}(\dta)g(\dta)\ddta = &c\int_0^t \mathcal{E}(\dta)\Big[ 1 +
	\norm*{\mathrm{D}\U}_3^2  + \norm*{\U}_6^4 +\norm*{\HH}_\infty^2\norm*{\trC}_4^2 + \norm*{\HH}_4^4 +\norm*{\trH}_4^2 + \norm*{\trC+\trH}_4^2 \norm*{\HH}_4^2\nonumber \\
	&   + \norm*{\na\psi}_3^2 + \norm*{\U}_\infty^2  +\norm*{\na\pi}_3^2 + \norm*{\na\phi}_3^2\norm*{Q}_\infty^2 + \norm*{\na Q}_6^4 +  \norm*{Q}_\infty^2 + \norm*{Q}_\infty^2 \norm*{\na\psi}_3^2 + \norm*{\na\phi}_3^2\norm*{\U}_\infty^2 \label{eq:rel_gron}\\
	& +\norm*{n(\psi)\na\pi - \na\big(A(\psi)Q \big) }_4^2\(1 + \norm*{\na\phi}_4^2 \) +\norm*{\mathrm{div}\big[n(\psi)\na\pi - \na\big(A(\psi)Q\big)\big]}_2^2\nonumber \\
	& + \norm*{\div{\U\psi-m(\psi)\na\pi+n(\psi)\na\big(A(\psi)Q\big)}}_2(\norm*{\phi}_3 + \norm*{\psi}_3)\Big] \ddta. \nonumber
\end{align}
Here $\mathcal{D}$ is given by (\ref{eq:Dfull}) and $b\leq 1-6\delta$. Thus, by choosing $\delta$ small enough we obtain $b < 1$.

Furthermore, the constant $c$ depends on the upper and lower bounds for the parametric functions, the $L^\infty$ norms of their derivatives, on the penalty coefficient $a$ and on the size of the domain $\Omega$.
We can see that for a more regular solution $(\psi,Q,\pi,\U,\HH)$, cf. Definition \ref{defn:moreweak}, $g\in L^1(0,T^\dagger)$ and therefore we can apply the Gronwall lemma which yields Theorem \ref{theo:wsu}.

\section{Conclusion}
We have developed a concept of the relative energy for the viscoelastic phase separation model (\ref{eq:full_model}). Due to the non-convexity of the potential $F$ it was necessary to modify the standard approach by introducing a suitable penalty term.  We showed that the relative energy leads to the weak-strong uniqueness principle for the weak solution \eqref{reg1}, (\ref{eq:weak_reg}).
In other words, if the strong solution satisfying \eqref{eq:wsutimereg} exists then all weak solutions coincide with the corresponding strong solution.

In future work we would like to apply the relative energy inequality in order to analyse convergence of numerical methods applied to viscoelastic phase separation model \eqref{eq:full_model}.

\section*{Acknowledgment}
This research was supported by the German Science Foundation (DFG) under the Collaborative Research Center TRR~146 Multiscale Simulation Methods for Soft Matters (Project~C3). M.L. is grateful to the Gutenberg Research College for supporting her research. We would like to thank A. Schömer for careful proof reading of the manuscript.

\section{Appendix}
\begin{align*}
	\mathcal{\tilde{R}}_{mix} =& -\iQt \Big[n(\phi)(\na\mu-\na\pi) - \na\Big(A(\phi)(q-Q)\Big) \Big]^2 \dx\ddta  +\iQt \Big(m(\psi) - m(\phi)\Big)(\na\mu-\na\pi)\na\pi\dx\ddta \\
	& +\iQt n(\phi)\na\mu\na\Big(A(\phi)Q\Big)\dx\ddta - \iQt n(\psi)\na\mu\na\Big(A(\psi)Q\Big)\dx\ddta +\iQt n(\phi)\na\pi\na\Big(A(\phi)q\Big)\dx\ddta\\
	&  - \iQt n(\psi)\na\pi\na\Big(A(\psi)q\Big)\dx\ddta  -2\iQt n(\phi)\na\pi\na\Big(A(\phi)Q\Big)\dx\ddta  + 2\iQt n(\psi)\na\pi\na\Big(A(\psi)Q\Big)\dx\ddta \\
	& -\iQt \snorm*{\na\Big(A(\psi)Q\Big)}^2 + \iQt \snorm*{\na\Big(A(\phi)Q\Big)}^2\dx\ddta  -\iQt \na\Big(A(\phi)q\Big)\na\Big(A(\phi)Q \Big)\dx\ddta + \iQt \na\Big(A(\psi)q\Big)\na\Big(A(\psi)Q\Big) \dx\ddta.
\end{align*}

Suitable reformulation yields
\begin{align*}
	\mathcal{\tilde{R}}_{mix} =& -\iQt \Big[n(\phi)(\na\mu-\na\pi) - \na\Big(A(\phi)(q-Q)\Big) \Big]^2\dx\ddta  +\iQt \Big(n(\psi) - n(\phi)\Big)\Big(n(\psi) + n(\phi)\Big)(\na\mu-\na\pi)\na\pi\dx\ddta \\
	& +\iQt n(\phi)(\na\mu-\na\pi)\na\Big(A(\phi)Q\Big)\dx\ddta - \iQt n(\psi)(\na\mu-\na\pi)\na\Big(A(\psi)Q\Big)\dx\ddta \\
	& +\iQt n(\phi)\na\pi\na\Big(A(\phi)(q-Q) \Big)\dx\ddta - \iQt n(\psi)\na\pi\na\Big(A(\psi)(q-Q) \Big)\dx\ddta \\
	& -\iQt \na\Big(A(\phi)Q\Big)\na\Big(A(\phi)(q-Q) \Big)\dx\ddta + \iQt \na\Big(A(\psi)Q\Big)\na\Big(A(\psi)(q-Q)\Big) \dx\ddta.
\end{align*}
We add $\pm n(\psi)(\na\mu-\na\pi)\na\Big(A(\phi)Q\Big)$ and $\pm n(\psi)\na\pi\na\Big(A(\phi)(q-Q)\Big)$ to find
\begin{align*}
	\mathcal{\tilde{R}}_{mix} =& -\iQt \Big[n(\phi)(\na\mu-\na\pi) - \na\Big(A(\phi)(q-Q)\Big) \Big]^2 \dx\ddta +\iQt \Big(n(\psi) - n(\phi)\Big)\Big(n(\psi) + n(\phi)\Big)(\na\mu-\na\pi)\na\pi\dx\ddta \\
	& +\iQt \Big(n(\phi) - n(\psi)\Big)(\na\mu-\na\pi)\na\Big(A(\phi)Q\Big) \dx\ddta +\iQt n(\psi)(\na\mu-\na\pi)\na\Big((A(\phi) - A(\psi))Q\Big)\dx\ddta \\
	& +\iQt \Big(n(\phi) - n(\psi)\Big)\na\pi\na\Big(A(\phi)(q-Q) \Big)\dx\ddta +\iQt n(\psi)\na\pi\na\Big((A(\phi) -A(\psi))(q-Q) \Big) \dx\ddta\\
	& -\iQt \na\Big(A(\phi)Q\Big)\na\Big(A(\phi)(q-Q) \Big)\dx\ddta + \iQt \na\Big(A(\psi)Q\Big)\na\Big(A(\psi)(q-Q)\Big) \dx\ddta.
\end{align*}
First we add $\pm\na\Big(A(\psi)Q \Big)\na\Big(A(\phi)(q-Q)\Big)$  and by rearranging the terms we deduce
\begin{align*}
	\mathcal{\tilde{R}}_{mix} =& -\iQt \Big[n(\phi)(\na\mu-\na\pi) - \na\Big(A(\phi)(q-Q)\Big) \Big]^2 \dx\ddta + \iQt \Big(n(\psi) - n(\phi)\Big)\na\pi\Big[ n(\phi)(\na\mu-\na\pi) - \na\Big(A(\phi)(q-Q)\Big)\Big] \dx\ddta\\
	& + \iQt \na\Big((A(\phi)-A(\psi))Q\Big)\Big[n(\psi)(\na\mu-\na\pi) - \na\Big(A(\phi)(q-Q)\Big)\Big] \dx\ddta\\
	& + \iQt \Big(n(\psi) - n(\phi)\Big)(\na\mu-\na\pi)\Big[ n(\psi)\na\pi - \na\Big(A(\phi)Q\Big) \Big]\dx\ddta \\
	& + \iQt \na\Big((A(\phi)-A(\psi))(q-Q)\Big)\Big[n(\psi)\na\pi - \na\Big(A(\psi)Q\Big) \Big]\dx\ddta.
\end{align*}
Now we add $\pm \na\Big((A(\phi)-A(\psi))Q\Big)n(\phi)(\na\mu-\na\pi)$ to derive
\begin{align*}
	\mathcal{\tilde{R}}_{mix} =& -\iQt \Big[n(\phi)(\na\mu-\na\pi) - \na\Big(A(\phi)(q-Q)\Big) \Big]^2 \dx\ddta + \iQt \Big(n(\psi) - n(\phi)\Big)\na\pi\Big[ n(\phi)(\na\mu-\na\pi) - \na\Big(A(\phi)(q-Q)\Big)\Big] \dx\ddta\\
	& + \iQt \na\Big((A(\phi)-A(\psi))Q\Big)\Big[n(\phi)(\na\mu-\na\pi) - \na\Big(A(\phi)(q-Q)\Big)\Big] \dx\ddta\\
	& + \iQt (n(\psi)-n(\phi))(\na\mu-\na\pi)\na\Big((A(\phi)-A(\psi))Q \Big) \dx\ddta\\
	& + \iQt \Big(n(\psi) - n(\phi)\Big)(\na\mu-\na\pi)\Big[ n(\psi)\na\pi - \na\Big(A(\phi)Q\Big) \Big]\dx\ddta \\
	& + \iQt \na\Big((A(\phi)-A(\psi))(q-Q)\Big)\Big[n(\psi)\na\pi - \na\Big(A(\psi)Q\Big) \Big]\dx\ddta.
\end{align*}
Finally, adding $$\pm (n(\psi) - n(\phi))\na\Big( A(\phi)(q-Q) \Big)\Big[n(\psi)\na\pi - \na\Big(A(\psi)Q\Big)\Big]/n(\phi)$$
implies the desired result
\begin{align*}
	\mathcal{\tilde{R}}_{mix} =& -\iQt \Big[n(\phi)(\na\mu-\na\pi) - \na\Big(A(\phi)(q-Q)\Big) \Big]^2 \dx\ddta  + \iQt \Big(n(\psi) - n(\phi)\Big)\na\pi\Big[ n(\phi)(\na\mu-\na\pi) - \na\Big(A(\phi)(q-Q)\Big)\Big] \dx\ddta\\
	& + \iQt \na\Big((A(\phi)-A(\psi))Q\Big)\Big[n(\phi)(\na\mu-\na\pi) - \na\Big(A(\phi)(q-Q)\Big)\Big]\dx\ddta \\
	& + \iQt \frac{n(\psi) - n(\phi)}{n(\phi)}\Big[n(\phi)(\na\mu-\na\pi) - \na\Big(A(\phi)(q-Q)\Big)\Big]\Big[ n(\psi)\na\pi - \na\Big(A(\psi)Q\Big) \Big]\dx\ddta \\
	& + \iQt \frac{n(\psi) - n(\phi)}{n(\phi)}\Big[n(\psi)\na\pi - \na\Big(A(\psi)Q \Big) \Big]\na\Big(A(\phi)(q-Q)\Big) \dx\ddta\\
	& - \iQt \Big((A(\phi)-A(\psi))(q-Q)\Big)\mathrm{div}\Big[n(\psi)\na\pi - \na\Big(A(\psi)Q\Big) \Big]\dx\ddta.
\end{align*}

\bibliography{paper_bib.bib}
\bibliographystyle{abbrv}

\end{document}